\documentclass{amsart}
\usepackage{graphicx} 
\usepackage{amssymb,amsthm,amsmath,mathtools,hyperref,float,comment,soul,xcolor,verbatim,graphicx,booktabs}
\usepackage{algorithm,algorithmicx,algcompatible,multirow,booktabs,array,float}
\usepackage{hyperref}
\usepackage{cleveref}
\usepackage[left=3cm, right=3cm, top=2.5cm, bottom=2.5cm]{geometry}
\newtheorem{theorem}{Theorem}[section]
\newtheorem{proposition}[theorem]{Proposition}

\newtheorem{corollary}[theorem]{Corollary}
\newtheorem{definition}[theorem]{Definition}
\newtheorem{remark}[theorem]{Remark}

\title{Regularization and compression of integral operator kernels with tensor paraproducts}
\author{Oluwadamilola Fasina}
\address{Department of Applied and Computational Mathematics \\ Yale University \\ New Haven, CT 06511 \\ USA}
\email{dami.fasina@yale.edu}

\author{Ronald R. Coifman}
\address{Department of Mathematics \\ Yale University \\ New Haven, CT 06511 \\ USA}
\email{coifman-ronald@yale.edu}
\begin{document}

\maketitle

\begin{abstract}

We present a new perspective on integral operator kernels given by a nonlinear function of the distance. This perspective is useful since it permits one to quasilinearize the nonlinear transformations acting on the tensor Haar expansion of the distance, d(x,y). This leads to a new representation comprised of a principal and residual term with desirable features; namely, enhanced local regularity of the principal component (and in certain situations rapid off-diagonal decay leading to O(N) storage) and a residual expansion of improved approximation as a consequence of enhanced global regularity. Numerical experiments on the potential kernel are included \footnotemark
\end{abstract}

\footnotetext{\href{https://github.com/obfasina/Regularization_compression_IOP_kernels/tree/main/Compression_regularization_TPA}{https://github.com/obfasina}}

\section{Introduction}

A common situation in computational mathematics \cite{nelson2026bridging,psenka2026parallel,demanet2007wave,madhu2026heist,lindsey2026goforth,fesserunitary,mateo2026lu,wilber2025time,trefethen2025numerical,dent2026controlling,cai2026globally,kondor2025principles,gopal2025highly,zhang2024finding} is the evaluation of integral operator kernels against functions. The potential, $K_n(x,y)$, and fractional Cauchy, $K_{\alpha}(r,\theta)$, kernels

\begin{align}
K_n(x,y) \coloneq log(\sum_{k=0}^n a_k d(x,y)^{-k} )  \qquad K_{\alpha}(r,\theta) \coloneq (re^{i \theta} - z)^{-\alpha} 
\label{eq:1}
\end{align}

arise in various physical and mathematical scenarios \cite{greengard2021fast,vico2016decoupled,barnett2011new,gimbutas2003generalized,yarvin1998generalized,cui2025enhanced,hibschweiler2020fractional}. We outline how these kernels (associated with the integral operators $T_\alpha: f(\theta) \to  \int K_{\alpha}(r,\theta) f(\theta)$ and $T_n: f(\theta) \to  \int K_{n}(x,y) f(y)$) can be regularized and compressed by means of a paradigm shift contingent on priors from harmonic analysis \cite{schlag2007remark,goldstein2001holder,erdougan2008strichartz,coifman1985some,coifman1986nonlinear,coifman2011harmonic,ostermann2022fourier,marsden2026splitting,gilbert2003one} and more specifically, paradifferential calculus \cite{muscalu2004bi,muscalu2006multi,bony1981calcul,fasina2025quasilinearization,daubechies1997harmonic,alazard2009paralinearization,alazard2024paracomposition}, enabling rapid evaluation of the range of $T_\alpha,T_n$. In particular, understanding integral operators kernels as manifestations of affinities, $d(x,y)$, composed with smooth functions permits one to reason about the frequency content of the affinity and the function it is composed with separately so long both functions have reasonable regularity. \\

$K_n(x,y)$, prima facie, has slow off-diagonal decay when $d(x,y) < 1$, resulting in a dense matrix with $O(N^2)$ storage; however, splitting $K_n(x,y)$ into two separate functions $\varphi_1 : f \to log(f)$ and $d(x,y) : [0,1]^1 \to [0,1]$ leads to a principal component in the decomposition with $O(N)$ storage, as rapid off-diagonal decay is realized through composing the low frequency content of $\texttt{supp}(\varphi_1)$ with the high frequency content of $\varphi_1$, leading to an integral operator compression technique with a perfume of the FWT method \cite{beylkin1991fast} and the like \cite{beylkin1991fast,grengard2006rapid,hackbusch1999sparse,beylkin1992representation,alpert1993wavelet,kaye2018transparent,wang2019monarch}. \\

Specifically, the insight that certain integral operator kernels can be split into two functions, can be exploited with tensor paraproducts - a program initiated in \cite{fasina2025quasilinearization} and further developed in \cite{fasina2026hierarchical,fasina2025d} - which allows one to differentially reason about the frequency content of function compositions with respect to their support. This leads to a principal term comprised of wavelet coefficients of the inner function composed with derivatives of an outer function multiplied by wavelet coefficients of the inner function, resembling the non-standard form described in the groundbreaking FWT paper \cite{beylkin1991fast}, and a residual term with improved global regularity by virtue of the tensor paraproduct decomposition. The tensor paraproduct decomposition can be applied to the kernels, $K_n(x,y)$ and $K_\alpha(r,\theta)$ in two ways: \emph{direct} and \emph{indirect} quasilinearization. \\

The \emph{direct} quasilinearization is discussed abstractly then specifically with the kernel $K_n(x,y)$. Its consequences are (i) the principal term, which contains high-low paraproducts (e.g. high freuqency content of the outer function acting on the low freuqency content of the inner function) and tensor wavelet expansions of the affinity, has improved local regularity in the sense of the Besov norm (ii) the principal term can compressed directly (as its representation is comprised of wavelet coefficients) when  the kernel has rapid off-diagonal decay by thresholding its matrix entries (iii) the residual term has increased global regularity, which comes for free as a consequence of the tensor paraproduct decomposition. For the \emph{indirect} quasilinearization, we describe how applying a smooth scalar function to $K_{\alpha}(r,\theta)$ leads to regularity gains due to the regularizing property of the tensor paraproduct decompostion, which again comes for free by virtue of \cite{fasina2025quasilinearization}. \\

The paper is structured as follows: in section 4 the relevant expository devices are introduced, and a corollary regarding the enhanced local regularity of kernels is stated and proved. Section 5 formalizes the direct and indirect quasilinearization methods, section 6 discussed the algorithms and complexity of the direct quasilinearization method, section 7 elaborates on the methods described in section 5 for the specific kernels, $K_n(x,y) \coloneq log(\sum_{k=0}^n a_k d(x,y)^{-k} )$ and $K_{\alpha}(r,\theta) \coloneq (re^{i \theta} - z)^{-\alpha} $, and section 8 concludes with numerical experiments.

\section{Acknowledgements}

The authors would like to thank Vladimir Rokhlin for helpful suggestions.

\section{Declaration of AI usage}

The authors are responsible for all ideation, mathematical reasoning, manuscript preparation, and most numerical computation. Claude was used only to assist with minor utility functions in Python, for which the authors also claim responsibility for careful verification.

\section{Tensor Paraproducts}

First we recall how tensor paraproducts \cite{fasina2025quasilinearization} can be used to approximate compositions of the form $A(f)$ for the situation $(A \in C^2(\mathbb{R}), f \in \Lambda_{\alpha}([0,1]^2))$ which is a natural setting for matrices, then we discuss a corollary to the main result of \cite{fasina2025quasilinearization} which has relevance to our proposed numerical method  \\

J.M. Bony's seminal work \cite{bony1981calcul} was expanded in  \cite{fasina2025quasilinearization,fasina2025d,fasina2026hierarchical} to tensor paraproducts, which permits finer analytical control for function compositions (of a certain regularity) by allocating distinct scaling parameters for each axis of the composition support. While \cite{fasina2025d} generalizes this technology to higher dimensions and \cite{fasina2026hierarchical} for non-Euclidean support, \cite{fasina2025quasilinearization} is useful for the matrix setting; we enunciate its main result below:

\begin{theorem}
Suppose  $A \in \mathcal{C}^2(\mathbb{R})$, $ f \in  \Lambda_\alpha([0,1]^2), 0 < \alpha < \frac{1}{2}$, then for the operator $T: f \to A(f)$ we can approximate $A(f)$ with

\begin{align}
&  \tilde{A}_{(N,N')}(f) =  \sum_{j=0}^{N} \sum_{j'=0}^{N'}  A'(P^jP'^{j'}(f)) Q^jQ'^{j'}(f) + A''(P^jP'^{j'}(f))Q^jP'^{j'}(f)P^jQ'^{j'}(f)  
\label{eq:2}
\end{align}

such that the multiscale tensor paraproduct transforms $T : f \to A(f)$ to 

\begin{align}
\Pi^{(N,N')}_{(A',A'')} : f \to \tilde{A}_{(N,N')}(f) + \Delta_{(N,N')}(A,f)
\label{eq:3}
\end{align}

where $\Delta_{(N,N')}(A,f) = A(f) - \tilde{A}_{(N,N')}(f) \in \Lambda_{2\alpha}([0,1]^2)$ is the residual which has twice the regularity of $f$ and 

\begin{align}
\lVert \Delta_{(N,N')}(A,f) \rVert_{\Lambda_{2\alpha}([0,1]^2)} \leq C_A \lVert f \rVert_{\Lambda_{\alpha}([0,1]^2)}
\label{eq:4}
\end{align}
\label{thm:4.1}
\end{theorem}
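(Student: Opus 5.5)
The plan is to prove Theorem~\ref{thm:4.1} by a two-parameter telescoping argument. Fix the dyadic conditional expectations $P^j$ in $x$ and $P'^{j'}$ in $y$. Put $Q^j = P^{j+1}-P^j$ and $Q'^{j'} = P'^{j'+1}-P'^{j'}$, with $P^0, P'^0$ the averages over $[0,1]$. Write $f_{j,j'} = P^jP'^{j'} f$. Since $f\in\Lambda_\alpha$, $f_{N,N'}\to f$ uniformly, so
\begin{align*}
A(f) = \lim_{N,N'\to\infty} A(f_{N+1,N'+1}) = A(f_{0,0}) + \sum_{j,j'} \Delta_j\Delta'_{j'} A(f_{j,j'}) + (\text{one-parameter boundary sums}),
\end{align*}
where $\Delta_j\Delta'_{j'}A(f_{j,j'}) = A(f_{j+1,j'+1}) - A(f_{j+1,j'}) - A(f_{j,j'+1}) + A(f_{j,j'})$.

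Next I would Taylor expand each mixed difference around $u=f_{j,j'}$. The increments are $a = Q^jP'^{j'}f$, $b = P^jQ'^{j'}f$ and $c = Q^jQ'^{j'}f$, so that $f_{j+1,j'}=u+a$, $f_{j,j'+1}=u+b$ and $f_{j+1,j'+1}=u+a+b+c$. Expanding to second order gives
\begin{align*}
A(u+a+b+c)-A(u+a)-A(u+b)+A(u) = A'(u)\,c + A''(u)\,ab + R_{j,j'},
\end{align*}
where $R_{j,j'}$ collects the terms $A''(u)(ac+bc+c^2/2)$ and the third-order-type remainders. Because $A$ is only $C^2$, I would write these remainders using the modulus of continuity of $A''$ rather than $A'''$. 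The leading terms are exactly the summands of $\tilde A_{(N,N')}(f)$ in \eqref{eq:2}. Hence $\Delta_{(N,N')}(A,f)$ equals $A(f_{0,0})$, plus the boundary one-parameter sums, plus $\sum R_{j,j'}$, plus the tails $j>N$ or $j'>N'$ of the principal sum.

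The regularity estimate \eqref{eq:4} comes from the size bounds on these pieces. For $f\in\Lambda_\alpha$ we have $|a|\lesssim 2^{-j\alpha}$, $|b|\lesssim 2^{-j'\alpha}$ and $|c|\lesssim 2^{-(j+j')\alpha}$. Each $R_{j,j'}$ therefore has size $\lesssim 2^{-2\alpha\max(j,j')}$ or better. It is also constant on dyadic rectangles of size $2^{-j-1}\times 2^{-j'-1}$. I would then use a characterization of the (mixed) $\Lambda_{2\alpha}$ norm through two-parameter Haar/martingale differences, and show that the pieces at level $(k,k')$ decay like $2^{-2\alpha k}2^{-2\alpha k'}$. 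The hypothesis $\alpha<1/2$ keeps $2\alpha<1$, which is the range where Haar-type characterizations of Hölder/Besov classes are valid. The one-parameter boundary sums are handled the same way using the one-dimensional paraproduct of Bony (Coifman–Meyer), where $A(g)-\sum A'(P^jg)Q^jg\in\Lambda_{2\alpha}$.

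The step I expect to be the main obstacle is this last summation. The remainders $R_{j,j'}$ are not themselves frequency-localized, so the naive bound only gives decay in $\max(j,j')$, not product decay in both indices. To get product decay, I would regroup using the exact identity $Q^j(FG)$-type commutator formulas. Concretely, I would first apply $Q^kQ'^{k'}$ to $\sum R_{j,j'}$ and split the sum into the regions $j<k$ and $j\ge k$ (and likewise for $j'$). In the low region I would exploit that $R_{j,j'}$ is already measurable at level $(j+1,j'+1)$, which kills many terms. In the high region I would use the sup bound directly. The $C^2$ assumption with a Hölder modulus on $A''$ (implicit in the constant $C_A$) would be used to control the mixed cross terms. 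Interpreting $\Lambda_{2\alpha}$ as the product-Hölder class makes this bookkeeping work, and I would state that interpretation explicitly.
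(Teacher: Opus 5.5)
The paper gives no proof of Theorem~\ref{thm:4.1}. It is quoted from \cite{fasina2025quasilinearization}, so your argument has to stand on its own. Your two-parameter telescoping is correct, and expanding the mixed difference at $u=f_{j,j'}$ does isolate $A'(u)c+A''(u)ab$ as exactly the summands of $\tilde A_{(N,N')}(f)$.

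The gap is the final summation, and it is not a bookkeeping problem. First, your size bound for $R_{j,j'}$ is false. We have $|A''(u)ac|\sim 2^{-\alpha(2j+j')}$, which at $j=0$ is $2^{-\alpha j'}\gg 2^{-2\alpha j'}$. Also, with $A''$ merely continuous, the Taylor error $\bigl[\int_0^1\!\!\int_0^1A''(u+sa+tb)\,ds\,dt-A''(u)\bigr]ab$ has no rate at all. More seriously, the measurability argument you intend to use exposes the obstruction rather than removing it. On each level-$(j,j')$ rectangle, $ac$ is constant in $x$ and has mean zero in $y$. Hence $A''(u)ac$ contributes only to $Q^kQ'^{k'}$ with $k<j$, $k'=j'$, with total size $\sum_{j>k}2^{-\alpha(2j+k')}\sim 2^{-2\alpha k}2^{-\alpha k'}$. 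That is a gain in $x$ only; symmetrically, $A''(u)bc$ gains only in $y$.

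This is sharp, so no regrouping can produce the product decay $2^{-2\alpha(k+k')}$. Take $A(t)=t^2/2$ and $f=g(x)h(y)$, and set $\pi(g)=\sum_jP^jg\,Q^jg$, $\sigma(g)=\sum_j(Q^jg)^2$ (likewise in $y$). From $g^2=\bar g^2+2\pi(g)+\sigma(g)$ one gets $\tilde A_{(\infty,\infty)}(f)=2\pi(g)\pi(h)$. So, modulo functions of one variable, the residual is $\pi(g)\sigma(h)+\sigma(g)\pi(h)+\tfrac12\sigma(g)\sigma(h)$. Choose $g(x)=x^2$, so that $Q^0\sigma(g)\neq 0$. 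Choose $h=1+w$, where $w\in\Lambda_\alpha$ is small with $\|Q'^{k'}w\|_\infty\asymp 2^{-\alpha k'}$. Then the $(0,k')$ coefficient of the residual is $\gtrsim 2^{-\alpha k'}$, because the other two terms are $O(2^{-2\alpha k'})$ there. The claimed mixed $\Lambda_{2\alpha}$ regularity therefore fails.

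With this two-term principal part, your scheme yields at best $\Lambda_{2\alpha}\otimes\Lambda_\alpha+\Lambda_\alpha\otimes\Lambda_{2\alpha}$, and even that needs $A''$ Lipschitz. A genuinely mixed-$2\alpha$ residual requires enlarging the principal part, at least by $A''(u)(ac+bc)$ and, for non-quadratic $A$, by cross terms such as $\tfrac12A'''(u)a^2b$.

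Three further points need fixing.
\begin{enumerate}
\item The boundary sums are not Bony remainders, because $\tilde A_{(N,N')}$ contains no one-parameter terms. The residual therefore contains $A(P'^0f)$ and $A(P^0f)$ in full. This is harmless only for a pure mixed seminorm, and then Bony's theorem is not needed.
\item The tails $j>N$ or $j'>N'$, which you list but never estimate, generically have coefficients of size $2^{-\alpha(j+j')}$. So finite $(N,N')$ cannot give a uniform $\Lambda_{2\alpha}$ bound.
\item No bound of the form \eqref{eq:4} can be linear in $\|f\|_{\Lambda_\alpha}$ for nonlinear $A$: replace $f$ by $\lambda f$ with $A(t)=t^2/2$.
\end{enumerate}
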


Informally, Theorem \ref{thm:4.1} says that if $f$ is a matrix with mixed $\alpha$-H\"older regularity, undergoing a smooth transformation with a $C^2$ function, we can approximate the high frequency content of the matrix, $A(f)$ with equation (\ref{eq:1}) where $P^jP'^{j'}, Q^jQ'^{j'}, Q^jP'^{j'}, P^jQ'^{j'}$ are operators that project $f$ into the tensor scaling, wavelet, wavelet-scaling, and scaling-wavelet bases respectively. \\


Recall from \cite{ankenman2018mixed} the following Besov norm:

\begin{definition}
$\lVert f \rVert_{ (\alpha,p) } = (\sum_{j,k,j',k'} \frac{|<f, \psi^j_k \times \psi^{j'}_{k'}>|^p}{|I^j_k \times I^{j'}_{k'}|^{(\alpha + \frac{1}{2} - \frac{1}{p})p}})^{\frac{1}{p}}$
\label{def:4.2}
\end{definition}

The following corollary characterizes the local regularity gain of the principal term, $\tilde{A}_{(N,N")}(f)$, in theorem \ref{thm:4.1}.

\begin{corollary}
The approximation, $ \tilde{A}_{(N,N')}(f)$, regularizes the range of $T : f \to A(f)$ in the sense of the Besov norm, $\lVert f \rVert_{(\alpha,p)}$, such that $\lVert \tilde{A}_{(N,N')}(f) \rVert_{(\alpha,p)} \leq C_{A,A',A''} N_1N'_1\lVert A(f) \rVert_{(\alpha,p)}$, so long the scaling functions $\phi^j_k, \phi^{j'}_{k'}$ used to build $\tilde{A}_{(N,N')}(f)$ satisfy  $\lVert \phi^j_k \times \phi^{j'}_{k'} \rVert_{L^2} > 1$
\label{cor:4.3}
\end{corollary}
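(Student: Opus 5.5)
The plan is to bound the Besov norm of $\tilde{A}_{(N,N')}(f)$ term by term, using the triangle inequality (Minkowski for $p\ge 1$) over the finitely many scale pairs $(j,j')$, $0\le j\le N$, $0\le j'\le N'$. This is where the factor $N_1N'_1$ comes from: we read $N_1=N+1$ and $N'_1=N'+1$ as the number of scales in each axis. It then suffices to show that each summand
\begin{align*}
A'(P^jP'^{j'}(f))\, Q^jQ'^{j'}(f) + A''(P^jP'^{j'}(f))\, Q^jP'^{j'}(f)\,P^jQ'^{j'}(f)
\end{align*}
has $(\alpha,p)$-norm at most $C_{A,A',A''}\lVert A(f)\rVert_{(\alpha,p)}$, uniformly in $(j,j')$.

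For a fixed $(j,j')$, the factors $A'(P^jP'^{j'}f)$ and $A''(P^jP'^{j'}f)$ are piecewise constant on the dyadic rectangles $I^j_k\times I^{j'}_{k'}$. Since $A\in\mathcal{C}^2$ and $f$ is bounded (it is H\"older on a compact set), they are bounded by $\sup|A'|$ and $\sup|A''|$ on the range of $f$. Consequently, testing the summand against a tensor Haar function $\psi^{l}_m\times\psi^{l'}_{m'}$ gives one of two outcomes. If the rectangle is finer than $(j,j')$ in both axes, it lies inside a single constant cell, so the coefficient is the constant $A'(P^jP'^{j'}f)$ times the corresponding coefficient of $Q^jQ'^{j'}f$ (plus the analogous product term). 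If the rectangle is coarser in some axis, orthogonality of $Q^j$ against coarser Haar functions forces the contribution to come only from a bounded number of neighbouring cells. In either case we obtain the pointwise coefficient bound
\begin{align*}
|\langle \text{summand}, \psi^l_m\times\psi^{l'}_{m'}\rangle| \le C_{A',A''}\Big(|\langle f,\psi^l_m\times\psi^{l'}_{m'}\rangle| + |\langle Q^jP'^{j'}f\cdot P^jQ'^{j'}f,\psi^l_m\times\psi^{l'}_{m'}\rangle|\Big).
\end{align*}
We then need to relate coefficients of $f$ to those of $A(f)$. The route is to paralinearize in reverse: $Q^jQ'^{j'}A(f)=A'(P^jP'^{j'}f)Q^jQ'^{j'}f+\dots$. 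On the cells where $A'$ is bounded below this gives $|\langle f,\psi\rangle|\lesssim|\langle A(f),\psi\rangle|$. The constant then depends on $A,A',A''$, which is why all three appear in $C_{A,A',A''}$.

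The hypothesis $\lVert\phi^j_k\times\phi^{j'}_{k'}\rVert_{L^2}>1$ enters when converting the product term. We have $P^jQ'^{j'}f=\sum\langle f,\phi^j_k\times\psi^{j'}_{k'}\rangle\,\phi^j_k\times\psi^{j'}_{k'}$, and the $L^\infty$-normalized averages $P^j$ carry a factor $\lVert\phi^j_k\times\phi^{j'}_{k'}\rVert_{L^2}^{-1}$ relative to the $L^2$-normalized coefficients. The hypothesis makes these factors at most $1$, so the quadratic term can be bounded by $\lVert f\rVert_\infty$ times a single wavelet-type coefficient. That coefficient is again dominated as above. The weights $|I^l_m\times I^{l'}_{m'}|^{-(\alpha+1/2-1/p)p}$ are identical on both sides, so summing the $p$-th powers yields the claim.

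The main obstacle is the reverse comparison $|\langle f,\psi\rangle|\lesssim|\langle A(f),\psi\rangle|$. It fails where $A'$ vanishes, and it is only approximate because of the residual $\Delta_{(N,N')}$. I would first try to absorb the residual using Theorem \ref{thm:4.1}: the residual lies in $\Lambda_{2\alpha}$, so its coefficients decay faster and contribute a lower-order term. If that fails, I would state the bound under a nondegeneracy assumption $\inf|A'|>0$ on the range of $f$, with the constant depending on $1/\inf|A'|$.
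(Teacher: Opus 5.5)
\textbf{The gap.} The missing step is the one you flag yourself: the reverse comparison $|\langle f,\psi\rangle|\lesssim|\langle A(f),\psi\rangle|$. Neither of your repairs can close it, because the inequality you are trying to prove is false even when $\inf|A'|>0$ on the range of $f$.

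Take $A(t)=t^2$ and $h=\chi_{[0,1/2)}-\chi_{[1/2,1)}$. Let $u$ take the values $10,20,10,10$ and $v=10/u$ the values $1,\tfrac12,1,1$ on the four dyadic quarters of $[0,1]$. Put $f(x,y)=u(x)+v(x)h(y)$. This is a $4\times 2$ matrix with $9\le f\le 20.5$, so $A'(f)=2f\ge 18$. Since $h^2=1$ and $uv\equiv 10$, we get $A(f)=u(x)^2+v(x)^2+20\,h(y)$. That is a function of $x$ plus a function of $y$, so all its tensor Haar coefficients vanish and $\lVert A(f)\rVert_{(\alpha,p)}=0$.

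Now compute the principal term. For any $N\ge 1$, $N'\ge 0$, only the summands with $j'=0$, $j\le 1$ survive, because $Q^jf=0$ for $j\ge 2$ and $Q'^{j'}f=0$ for $j'\ge 1$. Moreover $P^jP'^0f=P^ju$, $Q^jP'^0f=Q^ju$, $P^jQ'^0f=(P^jv)h(y)$ and $Q^jQ'^0f=(Q^jv)h(y)$. Hence
\begin{align*}
\tilde A_{(N,N')}(f)=2h(y)\sum_{j=0}^{1}\big(P^ju\,Q^jv+Q^ju\,P^jv\big)=\tfrac54\,h(x)h(y).
\end{align*}
Here the $j=1$ term vanishes: on $[0,\tfrac12)$ it is $15\cdot\tfrac14-5\cdot\tfrac34=0$, and $Q^1u=Q^1v=0$ on $[\tfrac12,1)$. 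The $j=0$ term is $2\big(\tfrac{25}{2}\cdot(-\tfrac18)+\tfrac52\cdot\tfrac78\big)h(x)h(y)$. So the left side is positive while the right side is $0$, for every constant. Orthogonal projections do not depend on how $\phi^j_k$ is normalized, so the hypothesis $\lVert\phi^j_k\times\phi^{j'}_{k'}\rVert_{L^2}>1$ does not help.

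\textbf{Why the residual cannot be absorbed.} In $A(f)$, the mixed coefficient $\tfrac54$ is cancelled by the high--high term $2\,Q^1P'^0f\cdot Q^1Q'^0f=-\tfrac52\chi_{[0,1/2)}(x)h(y)$, which $\tilde A$ omits. So the mixed part of $\Delta$ is exactly $-\tfrac54h(x)h(y)$: the residual is not lower order relative to $A(f)$. Theorem~\ref{thm:4.1} bounds $\Delta$ by $\lVert f\rVert_{\Lambda_\alpha}$, never by $A(f)$, so it cannot absorb this. Nor is any coefficientwise comparison of $f$ with $A(f)$ possible: $f$ has the nonzero mixed coefficients of $v(x)h(y)$, while $A(f)$ has none.

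\textbf{Further corrections.} Your coarse-scale case is misdescribed. With Haar functions, on each cell $R=I^j_k\times I^{j'}_{k'}$ the factors $A'(P^jP'^{j'}f)$ and $A''(P^jP'^{j'}f)$ are constant. Both $Q^jQ'^{j'}f$ and $Q^jP'^{j'}f\,P^jQ'^{j'}f$ are multiples of $\psi^j_k\times\psi^{j'}_{k'}$ there. Hence the $(j,j')$ summand is orthogonal to the tensor Haar functions of every other scale pair, coarser or finer; no ``neighbouring cells'' enter. Consequently:
\begin{itemize}
\item The coefficients of $\tilde A_{(N,N')}(f)$ are, up to $\lVert\psi^j_k\times\psi^{j'}_{k'}\rVert_{L^2}^2$, exactly $A'(m_R)\beta_R+A''(m_R)a_Rb_R$. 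Here $m_R$ is the mean of $f$ on $R$, $\beta_R$ its mixed coefficient, and $a_R,b_R$ its wavelet--scaling and scaling--wavelet coefficients.
\item The triangle inequality over scales and the factor $N_1N'_1$ are superfluous.
\item The corollary amounts to dominating these numbers by the coefficients of $A(f)$, which the example refutes.
\end{itemize}
What your first step actually proves is a bound by $\sup|A'|\,\lVert f\rVert_{(\alpha,p)}$ plus a term in the products $a_Rb_R$. Your claim that the quadratic term is ``dominated as above'' has nothing to appeal to. The coefficients $a_R,b_R$ are invisible to Definition~\ref{def:4.2}: for $f=g_1(x)+g_2(y)$ one has $\lVert f\rVert_{(\alpha,p)}=0$, but $a_Rb_R\neq 0$ in general.

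\textbf{Comparison with the paper.} The paper's proof follows your outline: triangle inequality over the $N_1N'_1$ scale pairs, a fine/coarse case split, and the hypothesis on $\lVert\phi^j_k\times\phi^{j'}_{k'}\rVert_{L^2}$ used to drop a normalizing factor. It bridges exactly this step with the unjustified last equality in (\ref{eq:12}). Its assertion (\ref{eq:7}), that coefficients vanish for $J_2\ge J_1$, is also wrong at $(j_2,j'_2)=(j_1,j'_1)$, the only scale where they are nonzero. The statement must be reformulated, for example with $f$-dependent quantities on the right, before any route can prove it.
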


\begin{proof}

First we introduce simplifying notation that permits us to delineate between the scales associated with the approximation, $\tilde{A}_{N,N'}(f)$, and the scales associated with the Besov norm, $\lVert \cdot \rVert_{\alpha,p}$. In particular, we associate the scaling parameters ($j_1,j'_1$) with the approximation $\tilde{A}_{N,N'}(f)$ i.e.

\begin{align}
&  \tilde{A}_{(N_1,N'_1)}(f) =  \sum_{j_1=0}^{N_1} \sum_{j_1'=0}^{N'_1}  A'(P^{j_1}P'^{j'_1}(f)) Q^{j_1}Q'^{j'_1}(f) + A''(P^{j_1}P'^{j'_1}(f))Q^{j_1}P'^{j'_1}(f)P^{j_1}Q'^{j'_1}(f)  
\label{eq:5}
\end{align}

and we associate the scaling parameters, $(j_2, j'_2)$, with the Besov norm, i.e.

\begin{align}
\lVert f \rVert_{ (\alpha,p) } = (\sum_{j_2,k_2,j'_2,k'_2} \frac{|<f, \psi^{j_2}_{k_2} \times \psi^{j'_2}_{k'_2}>|^p}{|I^{j_2}_k \times I^{j'_2}_{k'}|^{(\alpha + \frac{1}{2} - \frac{1}{p})p}})^{\frac{1}{p}}
\label{eq:6}
\end{align}

Suppose $J_2 = (j_2 + j'_2)$ and $J_1 = (j_1 + j'_1)$. The situation where $J_2 \geq J_1$ can be analyzed conceptually; the dyadic rectangle associated with the scales $j_2, j'_2$ under this condition is smaller than the dyadic rectangle associated with the scales $j_1, j'_1$ used in the approximation, so 

\begin{align}
<  A'(P_{k_1}^{j_1}P_{k'_1}'^{j'_1}(f)) Q_{k_1}^{j_1}Q_{k'_1}'^{j'_1}(f) + A''(P_{k_1}^{j_1}P_{k'_1}'^{j'_1}(f))Q_{k_1}^{j_1}P_{k'_1}'^{j'_1}(f)P_{k_1}^{j_1}Q_{k'_1}'^{j'_1}(f) , \psi^{j_2}_{k_2} \times \psi^{j'_2}_{k'_2}> = 0 
\label{eq:7}
\end{align}

for $k_2 = 1, \ldots, 2^{j_2}, k'_2 = 1, \ldots 2^{j'_2}, k_1 = 1, \ldots, 2^{j_1}, k'_1 = 1, \ldots 2^{j'_1}$ for the specified condition of $J_2 \geq J_1$. In the second case, $J_2 < J_1$, the dyadic rectangle associated with are coarser and requires more detailed analysis. Intuitively, the inequality still holds since $\tilde{A}_{(N_1,N'_1)}(f) $ is comprised of averages on dyadic rectangles, so it contains less variation than, $A(f)$, even when $\lVert \cdot \rVert_{ (\alpha,p) }$ is measured with coarser scales. To see this, consider the dyadic rectangles associated with $(j_1,k_1, j'_1,k'_1)$ and $(j_2,k_2, j'_2,k'_2)$. under the specified conditions of $J_2 < J_1$. Without loss of generality, we can proceed by relating $ \lVert f \rVert_{(0,1)}^n \coloneq |<f, \psi^{j_2}_{k_2} \times \psi^{j'_2}_{k'_2}>| $ to the  Besov norm such that $ \lVert f \rVert_{(0,1)}^n $ is equal to $\lVert f \rVert_{ (\alpha,p) }$ modulo the normalizing factor with parameters, $\alpha = 0, p=1$ for the dyadic rectangle associated with $(j_2,k_2, j'_2,k'_2)$. First realize,

\begin{align}
& \lVert A'(P_{k_1}^{j_1}P_{k'_1}'^{j'_1}(f)) Q_{k_1}^{j_1}Q_{k'_1}'^{j'_1}(f) + A''(P_{k_1}^{j_1}P_{k'_1}'^{j'_1}(f))Q_{k_1}^{j_1}P_{k'_1}'^{j'_1}(f)P_{k_1}^{j_1}Q_{k'_1}'^{j'_1}(f)  \rVert_{(0,1)}^n \leq \nonumber \\
& \lVert A'(P_{k_1}^{j_1}P_{k'_1}'^{j'_1}(f)) Q_{k_1}^{j_1}Q_{k'_1}'^{j'_1}(f) \rVert_{(0,1)}^n + \lVert A''(P_{k_1}^{j_1}P_{k'_1}'^{j'_1}(f))Q_{k_1}^{j_1}P_{k'_1}'^{j'_1}(f)P_{k_1}^{j_1}Q_{k'_1}'^{j'_1}(f) \rVert_{(0,1)}^n
\label{eq:8}
\end{align}

by triangle inequality. Let $\chi_{(j_1,k_1,j'_1,k'_1)}(x,y)$ be the characteristic function supported on the dyadic rectangle $I^{j_1}_{k_1} \times  I^{j'_1}_{k'_1}$, $\chi^{-}_{(j_1,k_1)}$ be the characteristic function supported on the left side of the interval of $I^{j_1,k_1}$, and $\chi^{+}_{(j_1,k_1)}$ for the right. Other instances of $\chi$ are defined similarly. Handling the terms on the right side of the inequality separately, beginning with the first term, one obtains

\begin{align}
& \lVert A'(P_{k_1}^{j_1}P_{k'_1}'^{j'_1}(f)) Q_{k_1}^{j_1}Q_{k'_1}'^{j'_1}(f) \rVert_{(0,1)}^n \leq \lVert A'(P_{k_1}^{j_1}P_{k'_1}'^{j'_1}(f)) \rVert_{(0,1)}^n \lVert Q_{k_1}^{j_1}Q_{k'_1}'^{j'_1}(f) \rVert_{(0,1)}^n \nonumber \\
& = \lVert A'(\int_{I^{j_1}_{k_1} \times I^{j'_1}_{k'_1}} \frac{f(x',y') \phi^{j_1}_{k_1}(x') \times \phi^{j'_1}_{k'_1}(y') dx' dy'}{  \lVert \phi^{j_1}_{k_1} \times \phi^{j'_1}_{k'_1} \rVert_{L^2( I^{j_1}_{k_1}  \times I^{j'_1}_{k'_1})} } \chi_{j_1,k_1,j'_1,k'_1}(x,y)) \rVert_{(0,1)}^n \lVert \int_{I^{j_1}_{k_1} \times I^{j'_1}_{k'_1}} \nonumber \\ & \frac{f(x',y') \psi^{j_1}_{k_1}(x') 
\times \psi^{j'_1}_{k'_1}(y') dx' dy' \chi_{(j_1,k_1,j'_1,k'_1)}(x,y)}{ \lVert \psi^{j_1}_{k_1} \times \psi^{j'_1}_{k'_1} \rVert_{L^2( I^{j_1}_{k_1} \times I^{j'_1}_{k'_1} )}} \rVert_{(0,1)}^n \nonumber \\
& = \lVert \frac{A'(\lVert f \rVert_{L^1} \chi_{(j_1,k_1,j'_1,k'_1)}(x,y))}{  \lVert \phi^{j_1}_{k_1} \times \phi^{j'_1}_{k'_1} \rVert_{L^2( I^{j_1}_{k_1} \times I^{j'_1}_{k'_1} )}    } \rVert_{(0,1)}^n \lVert \frac{ ( \lVert f \rVert_{L^1}  (\chi^{-}_{(j_1,k_1)}(x) - \chi^{+}_{(j'_1,k'_1)}(x')) \times (\chi^{-}_{(j_2,k_2)}(y) - \chi^{+}_{(j'_2,k'_2)}(y')) ) }{   \lVert \psi^{j_1}_{k_1} \times \psi^{j'_1}_{k'_1}  \rVert_{L^2( I^{j_1}_{k_1}  \times I^{j'_1}_{k'_1} )}  } \rVert_{(0,1)}^n \nonumber \\
& = < \frac{A'(\lVert f \rVert_{L^1} \chi_{(j_1,k_1,j'_1,k'_1)}(x,y))}{  \lVert \phi^{j_1}_{k_1} \times \phi^{j'_1}_{k'_1} \rVert_{L^2( I^{j_1}_{k_1}  \times I^{j'_1}_{k'_1} )  }}, \psi^{j_2}_{k_2}(x) \times \psi^{j'_2}_{k'_2}(y) > 
\nonumber \\ 
& < (\frac{ \lVert f \rVert_{L^1}  (\chi^{-}_{(j_1,k_1)}(x) - \chi^{+}_{(j'_1,k'_1)}(x)) \times (\chi^{-}_{(j_2,k_2)}(y) - \chi^{+}_{(j'_2,k'_2)}(y)) )} {  \lVert \phi^{j_1}_{k_1} \times \phi^{j'_1}_{k'_1} \rVert_{L^2( I^{j_1}_{k_1} \times I^{j'_1}_{k'_1} )   }  }, \psi^{j_2}_{k_2}(x) \times \psi^{j'_2}_{k'_2}(y) > \nonumber \\
& = \frac{1}{  \lVert \psi^{j_2}_{k_2} \times \psi^{j'_2}_{k'_2} \rVert_{L^2( I^{j_2}_{k_2} \times I^{j'_2}_{k_2} )}^2}  ( \int_{I^{j_2}_{k_2} \times I^{j'_2}_{k'_2}} \frac{A'(\lVert f \rVert_{L^1} \chi_{(j_1,k_1,j'_1,k'_1)}(x,y)) \psi^{j_2}_{k_2}(x') \times \psi^{j'_2}_{k'_2}(y')}{  \lVert \phi^{j_1}_{k_1} \times \phi^{j'_1}_{k'_1} \rVert_{L^2( I^{j_1}_{k_1} \times I^{j'_1}_{k'_1} ) }  }  dx' dy' ) \nonumber \\
&  (\int_{I^{j_2}_{k_2} \times I^{j'_2}_{k'_2}} \frac{ \lVert f \rVert_{L^1} (\psi^{j_1}_{k_1}(x) \times \psi^{j'_1}_{k'_1}(y)) (\psi^{j_2}_{k_2}(x') \times \psi^{j'_2}_{k'_2}(y')) } {  \lVert \phi^{j_1}_{k_1} \times \phi^{j'_1}_{k'_1} \rVert_{L^2( I^{j_1}_{k_1} \times I^{j'_1}_{k'_1} )   }  } dx' dy')  \psi^{j_2}_{k_2}(x) \times \psi^{j'_2}_{k'_2}(y) \nonumber \\
& = \frac{ \lVert A'(\lVert f \rVert_{L^1} \chi_{(j_1,k_1,j'_1,k'_1)}(x,y)) \psi^{j_2}_{k_2}(x') \times \psi^{j'_2}_{k'_2}(y')   \rVert_{L^1( I^{j_2}_{k_2} \times I^{j'_2}_{k'_2})}  \lVert  \lVert f \rVert_{L^1} (\psi^{j_1}_{k_1}(x) \times \psi^{j'_1}_{k'_1}(y)) (\psi^{j_2}_{k_2}(x') \times \psi^{j'_2}_{k'_2}(y'))   \rVert_{L^1( I^{j_2}_{k_2} \times I^{j'_2}_{k'_2})} }{\lVert \psi^{j_2}_{k_2} \times \psi^{j'_2}_{k'_2} \rVert_{L^2( I^{j_2}_{k_2} \times I^{j'_2}_{k'_2} )}^2  \lVert \phi^{j_1}_{k_1} \times \phi^{j'_1}_{k'_1} \rVert_{L^2( I^{j_1}_{k_1} \times I^{j'_1}_{k'_1})}^2}  \nonumber \\
& \leq  \frac{ \lVert A'(\lVert f \rVert_{L^1} \chi_{(j_1,k_1,j'_1,k'_1)}(x,y)) \psi^{j_2}_{k_2}(x') \times \psi^{j'_2}_{k'_2}(y')   \rVert_{L^1( I^{j_2}_{k_2} \times I^{j'_2}_{k'_2})}  \lVert  \lVert f \rVert_{L^1} (\psi^{j_1}_{k_1}(x) \times \psi^{j'_1}_{k'_1}(y)) (\psi^{j_2}_{k_2}(x') \times \psi^{j'_2}_{k'_2}(y'))   \rVert_{L^1( I^{j_2}_{k_2} \times I^{j'_2}_{k'_2})} }{\lVert \psi^{j_2}_{k_2} \times \psi^{j'_2}_{k'_2} \rVert_{L^2( I^{j_2}_{k_2} \times I^{j'_2}_{k'_2} )}^2  } \nonumber \\
& \leq C_{A'} \frac{ \lVert f \rVert_{L^1( I^{j_2}_{k_2} \times I^{j'_2}_{k'_2}    )}^2  }{ \lVert  \psi^{j_2}_{k_2} \times \psi^{j'_2}_{k'_2} \rVert_{L^2( I^{j_2}_{k_2} \times I^{j'_2}_{k'_2} )}^2} \nonumber \\
\label{eq:9}
\end{align}

where the first inequality comes from Cauchy-Schwartz, the second identity from the definition of $\lVert \cdot \rVert_{(0,1)}^n$, the the next inequality from, $ \lVert \phi^{j_1}_{k_1} \times \phi^{j'_1}_{k'_1} \rVert_{L^2( I^{j_1}_{k_1} \times I^{j'_1}_{k'_1})}^{-2}  $, and the last inequality from the norms of the composition, $A'(f)$. Since the second term has the identity:

\begin{align}
\lVert A''(P_{k_1}^{j_1}P_{k'_1}'^{j'_1}(f))Q_{k_1}^{j_1}P_{k'_1}'^{j'_1}(f)P_{k_1}^{j_1}Q_{k'_1}'^{j'_1}(f)   \rVert_{(0,1)}^n = \lVert A''(P_{k_1}^{j_1}P_{k'_1}'^{j'_1}(f))Q_{k_1}^{j_1}Q_{k'_1}'^{j'_1}(f)\rVert_{(0,1)}^n
\label{eq:10}
\end{align}

the estimate is identical to that of the first term modulo a constant related to $A^{''}$, giving:

\begin{align}
\lVert A''(P_{k_1}^{j_1}P_{k'_1}'^{j'_1}(f))Q_{k_1}^{j_1}Q_{k'_1}'^{j'_1}(f)\rVert_{(0,1)}^n \leq C_{A^{''}}  \frac{ \lVert f \rVert_{L^1( I^{j_2}_{k_2} \times I^{j'_2}_{k'_2}    )}^2  }{ \lVert  \psi^{j_2}_{k_2} \times \psi^{j'_2}_{k'_2} \rVert_{L^2( I^{j_2}_{k_2} \times I^{j'_2}_{k'_2} )}^2}
\label{eq:11}
\end{align}

Combining both estimates gives, 

\begin{align}
& \lVert A'(P_{k_1}^{j_1}P_{k'_1}'^{j'_1}(f)) Q_{k_1}^{j_1}Q_{k'_1}'^{j'_1}(f) + A''(P_{k_1}^{j_1}P_{k'_1}'^{j'_1}(f))Q_{k_1}^{j_1}P_{k'_1}'^{j'_1}(f)P_{k_1}^{j_1}Q_{k'_1}'^{j'_1}(f)  \rVert_{(0,1)}^n \leq C_{A',A''} \frac{ \lVert f \rVert_{L^1( I^{j_2}_{k_2} \times I^{j'_2}_{k'_2}    )}^2  }{ \lVert  \psi^{j_2}_{k_2} \times \psi^{j'_2}_{k'_2} \rVert_{L^2( I^{j_2}_{k_2} \times I^{j'_2}_{k'_2} )}^2} \nonumber \\
& = C_{A,A',A''} \lVert A(f) \rVert_{(0,1)}^n
\label{eq:12}
\end{align}

Recall that $\lVert \cdot \rVert_{(0,1)}^n$ is $\lVert \cdot \rVert_{(\alpha,p)}$ for fixed scales $(j'_2,j_2)$, $\alpha=0,p=1$, modulo a normalizing factor related to the support of the tensor wavelets. By inequalities \ref{eq:7} and \ref{eq:11} one has 

\begin{align}
\lVert A'(P^{j_1}P'^{j'_1}(f)) Q^{j_1}Q'^{j'_1}(f) + A''(P^{j_1}P'^{j'_1}(f))Q^{j_1}P'^{j'_1}(f)P^{j_1}Q'^{j'_1}(f) \rVert_{(\alpha,p)} \leq C_{A,A',A''} \lVert A(f) \rVert_{(\alpha,p)}
\label{eq:13}
\end{align}

for $j_1 = 0, \ldots, N_1, j'_1 = 0, \ldots, N'_1$ since we've shown the approximation at the fixed scales $(j_1,j'_1)$ holds for both conditions $J_2 \geq J_1$ and $J_2 < J_1$. We then have 

\begin{align}
& \lVert \tilde{A}_{(N_1,N'_1)}(f) \rVert_{(\alpha,p)} \nonumber \\  
& = \lVert \sum_{j_1=0}^{N_1} \sum_{j_1'=0}^{N'_1}  A'(P^{j_1}P'^{j'_1}(f)) Q^{j_1}Q'^{j'_1}(f) + A''(P^{j_1}P'^{j'_1}(f))Q^{j_1}P'^{j'_1}(f)P^{j_1}Q'^{j'_1}(f)  \rVert \nonumber \\
& \leq \sum_{j_1=0}^{N_1} \sum_{j_1'=0}^{N'_1} \lVert A'(P^{j_1}P'^{j'_1}(f)) Q^{j_1}Q'^{j'_1}(f) + A''(P^{j_1}P'^{j'_1}(f))Q^{j_1}P'^{j'_1} \rVert \nonumber \\
& \leq C_{A,A',A''} N_1N'_1\lVert A(f) \rVert_{(\alpha,p)}
\label{eq:14}
\end{align}

by triangle inequality, completing the proof.

\end{proof}

\begin{table}[h]
  \centering
  \caption{Nomenclature}
  \label{tab:yourlabel}
  \begin{tabular}{lcc}
    \toprule
    Object &  Name  \\
    \midrule
    $\Pi^{(N,N')}_{(A',A'')} : A(f) \to \tilde{A}(f) + \Delta(A,f) $ & Tensor paraproduct \\
    $\tilde{A}_{(N,N')}(f) + \Delta_{(N,N')}(A,f)$ & Tensor paraproduct decomposition (TPD) \\
    $\tilde{A}_{(N,N')}(f)$ & Tensor paraproduct approximation (TPA) \\
    $\Delta_{(N,N')}(A,f)$ & Tensor paraproduct residual (TPR)  \\
    \bottomrule
  \end{tabular}
  \label{tab:1}
\end{table}

Table \ref{tab:1} is included to provide clarity on various terminology used throughout the paper. In particular we distinguish between the operator used to construct the approximation, the range of the operator, and the principal and residual components of the range.

\section{Quasilinearization of kernels of integral operators}

We are focused on the quasilinearization of kernels, $K(x,y)$, of integral operators:

\begin{align}
T : f(y) \to \int K(x,y)f(y) dy
\label{eq:15}
\end{align}

There are three situations in which the quasilinearization is useful. The first is a direct application of the results of \cite{fasina2025quasilinearization}, which comes from realizing that a family of kernels of integral operators are nonlinear compositions of some affinity, $d(x,y)$. This realization leads to enhanced local and global regularity of $K(x,y)$ (as discussed in section 4). In the case of the potential kernel, $K_n(x,y)$, one also gains rapid off-diagonal decay of the principal component (the TPA) of the decomposition, permitting the range of $T$ to be computed rapidly. We refer to this as the \emph{direct} quasilinearization method. The second situation follows by exploiting that regularity gains are an immediate consequence of the tensor paraproduct decomposition, by virute of the results of \cite{fasina2025quasilinearization}. Thus following application of a $C^2(\mathbb{R})$ scalar function to a kernel and computing the tensor paraproduct decomposition yields regularity gains at the cost of an error related to the difference between the original kernel and the $C^2(\mathbb{R})$ scalar function applied to the kernel. We refer to this as the \emph{indirect} quasilinearization method. Finally, in situations where the expansion coefficients for a distance, $d(x,y)$, are available and one needs to rapidly construct the kernel, the tensor paraproduct formula permits us to multiply the high-low paraproducts, $A'(P^jP'^{j'}(d)), A''(P^jP'^{j'}(d))$, to the expansion coefficients of the distance, $d(x,y) = \sum_{j,j'=0}^{N,N'} Q^jQ'^{j'}(d) +  P^jQ'^{j'}(d)Q^jP'^{j'}(d)$, to construct the kernel. We elaborate on the \emph{direct} and \emph{indirect} quasilinearization with formal mathematical statements below.

\subsection{Direct quasilinearization}

Let the kernel, $K(x,y)\coloneq A_1(d(x,y))$, be understood by the following operator:

\begin{align}
S_1 : d(x,y) \to A_1(d(x,y))
\label{eq;16}
\end{align}

The following proposition permits us to replace $S_1$ with the tensor paraproduct, $\Pi^{N,N'}_{A_1' A_1^{''}}$:

\begin{proposition}
Suppose $d(x,y) \in \Lambda_{\alpha}([0,1]^2), A_1 \in C^2(\mathbb{R})$ then the operator $S_1 : d(x,y) \to A_1(d(x,y))$ can be replaced by the tensor paraproduct ${\Pi}^{N,N'}_{A_1', A_1^{''}}$:

\begin{align}
\Pi^{N,N'}_{A_1' A_1^{''}} : d(x,y) \to \tilde{A}_{1(N,N')}(d(x,y)) + \Delta_{(N,N')}(A_1, d(x,y))
\label{eq:17}
\end{align}

\label{prop:5.1}
\end{proposition}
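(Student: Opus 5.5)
The proposition is essentially a specialization of Theorem \ref{thm:4.1}, so the plan is to verify that its hypotheses are met and then read off the conclusion with $f$ replaced by $d$ and $A$ by $A_1$. Concretely, I would set $f \coloneq d(x,y)$, regarded as a function on $[0,1]^2$ (equivalently, the matrix of pairwise affinities sampled on a dyadic grid), and $A \coloneq A_1$. By hypothesis $d \in \Lambda_\alpha([0,1]^2)$ and $A_1 \in C^2(\mathbb{R})$, which is exactly the setting of Theorem \ref{thm:4.1}, provided $\alpha$ lies in the admissible range $0<\alpha<\tfrac12$. Since the proposition does not restrict $\alpha$, I would first note that if $d \in \Lambda_\alpha$ for some $\alpha \geq \tfrac12$, then (on the compact domain $[0,1]^2$) $d \in \Lambda_{\beta}$ for every $\beta<\alpha$. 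In particular $d \in \Lambda_{\beta}$ for a fixed $\beta \in (0,\tfrac12)$, so one may always apply the theorem with $\beta$ in place of $\alpha$.

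Next I would define the principal term $\tilde{A}_{1(N,N')}(d)$ by formula (\ref{eq:2}) with $A=A_1$ and $f=d$. That is, I would form the high--low tensor paraproducts $A_1'(P^jP'^{j'}d)\,Q^jQ'^{j'}d$ and $A_1''(P^jP'^{j'}d)\,Q^jP'^{j'}d\,P^jQ'^{j'}d$, summed over $0\le j\le N$ and $0\le j'\le N'$. These are well defined because $A_1', A_1''$ are continuous and the projections $P^jP'^{j'}d$ are bounded: $d$ is H\"older, hence bounded on $[0,1]^2$. The residual is then defined by $\Delta_{(N,N')}(A_1,d) \coloneq A_1(d)-\tilde{A}_{1(N,N')}(d)$. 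With these definitions the identity $A_1(d) = \tilde{A}_{1(N,N')}(d)+\Delta_{(N,N')}(A_1,d)$ holds tautologically. This is what ``$S_1$ can be replaced by $\Pi^{N,N'}_{A_1',A_1''}$'' means: the two operators agree on $d$, and $\Pi$ additionally records the splitting into principal and residual parts.

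Finally, I would invoke Theorem \ref{thm:4.1} to give this replacement its content. The residual satisfies $\Delta_{(N,N')}(A_1,d)\in\Lambda_{2\alpha}([0,1]^2)$, with $\lVert \Delta_{(N,N')}(A_1,d)\rVert_{\Lambda_{2\alpha}} \le C_{A_1}\lVert d\rVert_{\Lambda_\alpha}$. Corollary \ref{cor:4.3}, applied with the same substitution, controls the principal term in the Besov norm $\lVert\cdot\rVert_{(\alpha,p)}$.

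The only step I expect to need care is the first: checking that the affinity genuinely lies in a mixed H\"older class on $[0,1]^2$ with an exponent in $(0,\tfrac12)$, and that the range of $d$ lies where $A_1$ is $C^2$. For the potential kernel, for example, $A_1$ may be singular at $d=0$ along the diagonal. I would handle this by assuming $A_1 \in C^2$ on an interval containing the closed range of $d$, which the hypothesis $A_1\in C^2(\mathbb{R})$ already grants. I would also use the embedding $\Lambda_\alpha\subset\Lambda_\beta$ for $\beta<\alpha$ to bring $\alpha$ into range. Beyond that, the argument is a direct transcription of Theorem \ref{thm:4.1}.
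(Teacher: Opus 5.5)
Your proposal is correct and follows the paper's own route. The paper's proof is the one-line observation that the hypotheses place you in the setting of Theorem~\ref{thm:4.1} with $f=d$ and $A=A_1$, where $\Delta_{(N,N')}(A_1,d)$ is by definition $A_1(d)-\tilde{A}_{1(N,N')}(d)$. Your extra step fills a gap the paper leaves, since the proposition does not restrict $\alpha$: when $\alpha\ge\tfrac12$ you pass to some $\beta\in(0,\tfrac12)$ via $\Lambda_\alpha\subset\Lambda_\beta$. Note, though, that in that case Theorem~\ref{thm:4.1} only gives $\Delta_{(N,N')}(A_1,d)\in\Lambda_{2\beta}$, not $\Lambda_{2\alpha}$ as you later write.
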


\begin{proof}
Immediate from the regularity assumptions on $d(x,y)$ and $A_1$.
\end{proof}

\begin{remark}
Proposition \ref{prop:5.1} is a formal statement permitting one to apply the tensor paraproduct decomposition, under appropriate regularity conditions 
\label{rem:5.2}
\end{remark}

The local and global regularity gains of the range of the tensor paraproduct are characterized by the following theorem. 

\begin{theorem}
The TPA, $\tilde{A}_{1(N,N")}(d(x,y))$, contains more regularity in the sense of the Besov norm, $\lVert \cdot \rVert_{\alpha,p}$, than the range of $S_1$ (the original kernel, $A_1(d))$. In particular, one has:

\begin{align}
& \lVert \tilde{A}_{1(N,N')}(d(x,y)) \rVert_{\alpha,p} \leq \lVert A_1(d(x,y)) \rVert_{\alpha,p} \nonumber \\
\label{eq:18}
\end{align}

Furthermore, the TPR, $\Delta_{N,N'}(A,d(x,y))$, contains more global regularity than than the original kernel, $A(d(x,y))$, since $\Delta_{N,N'}(A,d(x,y)) \in \Lambda_{2\alpha}$ while $A(f) \in \Lambda_\alpha$

\label{thm:5.3}
\end{theorem}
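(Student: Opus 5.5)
The statement has two parts, and I would treat them separately: a local (Besov) estimate for the TPA and a global (H\"older) gain for the TPR. Both should come from results already in the paper, applied with $f = d(x,y)$ and $A = A_1$. Proposition \ref{prop:5.1} justifies this specialization. Since $d \in \Lambda_\alpha([0,1]^2)$ and $A_1 \in C^2(\mathbb{R})$, the hypotheses of Theorem \ref{thm:4.1} and Corollary \ref{cor:4.3} are met, with the caveat $0<\alpha<\tfrac12$ that I would make explicit.

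\textbf{Global part (the TPR).} This is essentially a restatement of Theorem \ref{thm:4.1}. The residual $\Delta_{(N,N')}(A_1,d) = A_1(d) - \tilde{A}_{1(N,N')}(d)$ lies in $\Lambda_{2\alpha}([0,1]^2)$, with $\lVert \Delta_{(N,N')}(A_1,d)\rVert_{\Lambda_{2\alpha}} \le C_{A_1}\lVert d\rVert_{\Lambda_\alpha}$. The kernel itself satisfies $A_1(d) \in \Lambda_\alpha$, because composition with a $C^1$ (hence locally Lipschitz) function preserves the H\"older class on the bounded range of $d$. Since $\Lambda_{2\alpha} \subsetneq \Lambda_\alpha$, the residual is strictly more regular. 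I would cite these two facts and stop.

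\textbf{Local part (the TPA).} I would rerun the argument of Corollary \ref{cor:4.3}, but try to remove the factor $C_{A,A',A''}N_1N'_1$ so as to reach constant $1$. The planned steps are as follows.

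\begin{enumerate}
\item Expand $\lVert \tilde{A}_{1(N,N')}(d)\rVert_{(\alpha,p)}$ via Definition \ref{def:4.2} into the tensor Haar coefficients of the TPA. Split into the regimes $J_2 \ge J_1$ and $J_2 < J_1$, as in the proof of Corollary \ref{cor:4.3}.
\item In the fine regime $J_2 \ge J_1$, the contributions vanish by (\ref{eq:7}).
\item In the coarse regime $J_2 < J_1$, I would not apply the triangle inequality over all $(j_1,j'_1)$, since that is what produces $N_1N'_1$. Instead I would use the telescoping structure of the paraproduct. The terms $A_1'(P^{j_1}P'^{j'_1}d)\,Q^{j_1}Q'^{j'_1}d + A_1''(\cdots)$ are first/second-order Taylor increments of $A_1(P^{j_1+1}P'^{j'_1+1}d) - A_1(P^{j_1}P'^{j'_1}d)$.
\item Pairing with a coarse wavelet $\psi^{j_2}_{k_2}\times\psi^{j'_2}_{k'_2}$ therefore yields the corresponding coefficient of $A_1(d)$, minus the coefficient of the residual.
\item Bound each TPA coefficient in absolute value by the corresponding coefficient of $A_1(d)$. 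Sum with the Besov weights to obtain the stated inequality.
\end{enumerate}

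\textbf{Main obstacle.} The difficulty is step 5: the sign/cancellation issue of controlling $|\langle \tilde{A}_1, \psi\rangle|$ by $|\langle A_1(d),\psi\rangle|$ pointwise in coefficients, without a constant. This is not automatic, because the residual coefficient could have either sign. My first attempt would use the normalization hypothesis $\lVert \phi^{j}_k\times\phi^{j'}_{k'}\rVert_{L^2}>1$ from Corollary \ref{cor:4.3} to absorb constants, as was done there. If that fails, I would settle for the weaker statement $\lVert \tilde{A}_{1(N,N')}(d)\rVert_{\alpha,p} \le C_{A_1,A_1',A_1''}N_1N'_1\lVert A_1(d)\rVert_{\alpha,p}$, which follows directly from Corollary \ref{cor:4.3}. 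I would then interpret (\ref{eq:18}) as holding up to that constant.
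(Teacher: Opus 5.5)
Your treatment of the residual is the paper's argument: Theorem \ref{thm:4.1} with $f=d$, $A=A_1$. One repair is needed. $\Lambda_\alpha([0,1]^2)$ is the \emph{mixed} H\"older class, and composition with a merely locally Lipschitz (or $C^1$) function does not preserve it. To get $A_1(d)\in\Lambda_\alpha$ you must bound the rectangular difference of $A_1(d)$ by $\lVert A_1'\rVert_\infty$ times that of $d$, plus $\lVert A_1''\rVert_\infty$ times the product of the two one-variable differences of $d$; this is where $A_1\in C^2$ is used. For the Besov part, the paper's whole proof is the citation of Corollary \ref{cor:4.3}. You correctly observe that this yields the factor $C_{A,A',A''}N_1N'_1$, not the constant $1$ in \eqref{eq:18}. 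Your fallback is therefore exactly the paper's route, and like it, it proves a weaker inequality than the one stated.

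The genuine gap is step 5, and it cannot be closed, because \eqref{eq:18} is false. The quantity in Definition \ref{def:4.2} sees only tensor wavelet coefficients, so it vanishes on every function of the form $u(x)+v(y)$. Take $A_1(t)=t^3$ and $d=(u(x)+v(y))^{1/3}$, for instance the quadrant-wise constant $d=\begin{pmatrix}1&2\\2&15^{1/3}\end{pmatrix}$. Then $A_1(d)=\begin{pmatrix}1&8\\8&15\end{pmatrix}$, so $\lVert A_1(d)\rVert_{(\alpha,p)}=0$. Use $P^j$ the dyadic averages and $Q^j=P^{j+1}-P^j$, and write $d=a+\beta h(x)+\gamma h(y)+\epsilon h(x)h(y)$ with $a\approx 1.867$, $\beta=\gamma\approx 0.367$, $\epsilon\approx-0.133$. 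Only the $(0,0)$ term survives, so for every $N,N'$ the TPA equals $(3a^2\epsilon+6a\beta\gamma)\,h(x)h(y)\approx 0.11\,h(x)h(y)$, whose seminorm is positive. The discrepancy is the third-order Taylor remainder $3(\beta^2+\gamma^2)\epsilon+\epsilon^3$, which has no sign or size relation to the coefficients of $A_1(d)$. Smoothing $u,v$ (keeping $u+v\ge 1$) gives smooth $d$ with the same behavior. For the piecewise-constant convention of Algorithm \ref{alg:1}, embedding this block as one quadrant of a $4\times 4$ matrix $A_1^{-1}(u_i+v_j)$ gives the same effect at scale $(1,1)$. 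Hence neither coefficientwise domination nor the normalization $\lVert\phi^j_k\times\phi^{j'}_{k'}\rVert_{L^2}>1$ can help. No finite constant gives $\lVert\tilde{A}_{1(N,N')}(d)\rVert_{(\alpha,p)}\lesssim\lVert A_1(d)\rVert_{(\alpha,p)}$, and the same example undercuts the fallback through Corollary \ref{cor:4.3}. A smaller point: in step 3 the TPA terms are Taylor expansions of the rectangular difference $A_1(P^{j+1}P'^{j'+1}d)-A_1(P^{j+1}P'^{j'}d)-A_1(P^{j}P'^{j'+1}d)+A_1(P^{j}P'^{j'}d)$, not of the diagonal increment. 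Set up that way, steps 3--4 give a correct identity: the TPA coefficient equals the coefficient of $A_1(P^{N+1}P'^{N'+1}d)$ minus a remainder coefficient. No inequality of the form \eqref{eq:18} follows from it.
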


\begin{proof}
Immediate consequence of corollary \ref{cor:4.3} and theorem \ref{thm:4.1}
\end{proof}

\subsection{Indirect quasilinearization}

Now consider the situation where we apply a smooth function, $A_2$ to a kernel, $K(x,y)$. The following proposition permits us to replace the operator

\begin{align}
S_2 : K(x,y) \to A_2(K(x,y))
\label{eq:19}
\end{align}

with the tensor paraproduct, $\Pi^{N,N'}_{A_2' A_2^{''}}$.

\begin{proposition}
If $K(x,y) \in \Lambda_{\alpha}([0,1]^2), A_2 \in C^2(\mathbb{R})$ the operator $S_2 : K(x,y) \to A_2(K(x,y))$ can be replaced by the tensor paraproduct ${\Pi}^{N,N'}_{A_2', A_2^{''}}$:

\begin{align}
\Pi^{N,N'}_{A_2' A_2^{''}} : K(x,y) \to \tilde{A}_{2(N,N')}(K(x,y)) + \Delta_{(N,N')}(A_2, K(x,y))
\label{eq:20}
\end{align}

\label{prop:5.4}
\end{proposition}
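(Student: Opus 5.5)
The plan is to mirror the argument for Proposition \ref{prop:5.1} and reduce the statement to Theorem \ref{thm:4.1} by a change of names. Set $f \coloneq K(x,y)$ and $A \coloneq A_2$. The hypotheses of Proposition \ref{prop:5.4}, namely $K \in \Lambda_\alpha([0,1]^2)$ and $A_2 \in C^2(\mathbb{R})$, are then exactly those of Theorem \ref{thm:4.1}, provided we also take $0<\alpha<\frac12$, as Theorem \ref{thm:4.1} requires. I would state that restriction explicitly, or note that it is implicit in the notation $\Lambda_\alpha$ used throughout. With this identification, the operator $S_2 : K \to A_2(K)$ is literally the operator $T : f \to A(f)$ of Theorem \ref{thm:4.1}.

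The key steps, in order, are as follows.

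First, define the principal term $\tilde{A}_{2(N,N')}(K)$ by formula (\ref{eq:2}) with $A'$, $A''$ replaced by $A_2'$, $A_2''$. This requires only that $A_2'$ and $A_2''$ be continuous, so that $A_2'(P^jP'^{j'}K)$ and $A_2''(P^jP'^{j'}K)$ are defined and bounded on the range of the averages of $K$. That range is bounded because $K$ is H\"older, hence bounded, on the compact square.

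Second, define the residual $\Delta_{(N,N')}(A_2,K) \coloneq A_2(K) - \tilde{A}_{2(N,N')}(K)$. Then the identity $A_2(K) = \tilde{A}_{2(N,N')}(K) + \Delta_{(N,N')}(A_2,K)$ holds tautologically. This is what it means for $S_2$ to be ``replaced'' by $\Pi^{N,N'}_{A_2',A_2''}$ in (\ref{eq:20}), since both maps have the same output.

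Third, invoke Theorem \ref{thm:4.1} to record the nontrivial content: $\Delta_{(N,N')}(A_2,K) \in \Lambda_{2\alpha}([0,1]^2)$, together with the bound $\lVert \Delta \rVert_{\Lambda_{2\alpha}} \le C_{A_2}\lVert K\rVert_{\Lambda_\alpha}$. This is what makes the replacement meaningful rather than purely formal.

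The only step needing care is checking that the kernels to which the indirect method is actually applied satisfy the hypothesis $K \in \Lambda_\alpha([0,1]^2)$ in the mixed (tensor) H\"older sense. Examples are $K_\alpha(r,\theta)$ after rescaling $(r,\theta)$ to the unit square and with $z$ kept away from the contour. Since the proposition takes this as a hypothesis, the proof itself does not need it, but I would add a remark on it.

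I expect no real obstacle. If one insists on a self-contained argument rather than citing Theorem \ref{thm:4.1}, the hard part is the $\Lambda_{2\alpha}$ estimate of the residual. That estimate comes from a second-order Taylor expansion of $A_2$ across consecutive dyadic scales in each axis, $A_2(P^{j+1}P'^{j'+1}K) - A_2(P^{j}P'^{j'}K)$. The first-order and mixed second-order terms reproduce the summands of (\ref{eq:2}), and the remainder terms are quadratic in the wavelet coefficients, hence of size $2^{-2\alpha(j+j')}$ in the mixed scale. I would defer this to \cite{fasina2025quasilinearization} rather than redo it here.
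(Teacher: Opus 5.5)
Your proposal is correct and matches the paper, whose proof is the one-line remark that the statement is immediate from the regularity hypotheses, i.e.\ it is Theorem~\ref{thm:4.1} read with $f=K$ and $A=A_2$. Your explicit note that $0<\alpha<\tfrac12$ must be assumed (the proposition omits it) is a correct addition, and there is one small caution on your optional self-contained sketch: the summands of (\ref{eq:2}) come from the mixed second difference $A_2(P^{j+1}P'^{j'+1}K)-A_2(P^{j+1}P'^{j'}K)-A_2(P^{j}P'^{j'+1}K)+A_2(P^{j}P'^{j'}K)$, not from the single diagonal difference you wrote, whose first-order term would also contain $A_2'(P^jP'^{j'}K)\,Q^jP'^{j'}K$ and $A_2'(P^jP'^{j'}K)\,P^jQ'^{j'}K$, but since you defer that argument to \cite{fasina2025quasilinearization} it does not affect the proof.
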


\begin{proof}
Immediate from the regularity assumptions on $K(x,y)$ and $A_2$.
\end{proof}

\begin{remark}
Note the distinction between Propositions \ref{prop:5.1} and \ref{prop:5.4}. In the former situation, we split the kernel $K(x,y)$ of the integral operator, $T$, into $A_1$ and $d(x,y)$ while in the latter situation, we consider the application of $A_2$ to $K(x,y)$
\label{rem:5.5}
\end{remark}

Let $K_{A_2}(x,y) \coloneq A_2(K(x,y))$. The advantage of $K_{A_2}(x,y)$ is the gain in regularity, in the sense of the Besov norm, it holds over $K(x,y)$ which is formalized as:

\begin{theorem}
        
The range of the tensor paraproduct, $\Pi^{N,N'}_{A_2' A_2^{''}}$, contains more local regularity (in the sense of the Besov norm, $\lVert \cdot \rVert_{\alpha,p}$) and global $\alpha$-Holder regularity than the original kernel, $K(x,y)$.

\label{thm:5.6}
\end{theorem}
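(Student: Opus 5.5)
The plan is to follow the template of the proof of Theorem \ref{thm:5.3}, with $A_2$ in place of $A_1$ and the kernel $K(x,y)$ itself in place of the affinity $d(x,y)$. By Proposition \ref{prop:5.4}, the hypotheses $K \in \Lambda_{\alpha}([0,1]^2)$ with $0<\alpha<\frac{1}{2}$ and $A_2 \in C^2(\mathbb{R})$ place us in the setting of Theorem \ref{thm:4.1} with $f = K$ and $A = A_2$. Hence
\begin{align*}
A_2(K) = \tilde{A}_{2(N,N')}(K) + \Delta_{(N,N')}(A_2,K).
\end{align*}
The statement then splits into two claims. The first is about \emph{local} regularity of the principal term $\tilde{A}_{2(N,N')}(K)$, measured in the Besov norm of Definition \ref{def:4.2}. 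The second is about \emph{global} H\"older regularity of the residual $\Delta_{(N,N')}(A_2,K)$. I will treat them in that order.

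For the global statement, I apply Theorem \ref{thm:4.1} directly. It gives $\Delta_{(N,N')}(A_2,K) \in \Lambda_{2\alpha}([0,1]^2)$ together with the bound
\begin{align*}
\lVert \Delta_{(N,N')}(A_2,K) \rVert_{\Lambda_{2\alpha}} \leq C_{A_2} \lVert K \rVert_{\Lambda_{\alpha}},
\end{align*}
whereas $K$ is only assumed to lie in $\Lambda_\alpha$. Since $2\alpha > \alpha$, the residual part of the range is strictly more regular than the original kernel. This step is essentially bookkeeping.

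For the local statement, I invoke Corollary \ref{cor:4.3} with $f = K$, $A = A_2$. It yields
\begin{align*}
\lVert \tilde{A}_{2(N,N')}(K) \rVert_{(\alpha,p)} \leq C_{A_2,A_2',A_2''} N N' \lVert A_2(K) \rVert_{(\alpha,p)},
\end{align*}
provided the scaling functions satisfy $\lVert \phi^j_k\times\phi^{j'}_{k'}\rVert_{L^2}>1$. The comparison required by the theorem, however, is against $K$, not against $A_2(K)$. I would close this gap with the chain rule in Besov form: because $A_2 \in C^2$, one can control $\lVert A_2(K)\rVert_{(\alpha,p)} \lesssim \lVert A_2'\rVert_{L^\infty} \lVert K\rVert_{(\alpha,p)}$ for bounded $K$. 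The idea is that each tensor wavelet coefficient of $A_2(K)$ on a dyadic rectangle is bounded by the oscillation of $K$ on that rectangle, scaled by $\sup|A_2'|$. Mixed (rectangle) oscillations pick up an additional $A_2''$ term, which is handled exactly as in the second-term estimate of the proof of Corollary \ref{cor:4.3}, namely equation \eqref{eq:10}. Combining the two inequalities gives $\lVert \tilde{A}_{2(N,N')}(K)\rVert_{(\alpha,p)} \leq C \lVert K\rVert_{(\alpha,p)}$, which is the intended sense of ``more local regularity''.

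The main obstacle is this final comparison step. Corollary \ref{cor:4.3} carries the growth factor $NN'$ and a constant depending on $A_2$, so a genuine inequality with constant at most one is not literally available. I would first try to interpret ``contains more regularity'' as ``is controlled in $\lVert\cdot\rVert_{(\alpha,p)}$ by'', exactly as Theorem \ref{thm:5.3} does. If a sharper form is wanted, I would try normalizing $A_2$ so that $\lVert A_2'\rVert_\infty + \lVert A_2''\rVert_\infty$ is small enough to absorb $C\,NN'$. Failing that, I would use the vanishing relation \eqref{eq:7}: it kills all Besov scales with $J_2 \geq J_1$, so the approximation has no content at finer scales than $(N,N')$, and that truncation itself witnesses the local smoothing.
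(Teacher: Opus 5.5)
Your route is the paper's own. Its proof is the single line ``immediate consequence of Corollary \ref{cor:4.3} and Theorem \ref{thm:4.1}'', and your global half (the residual lies in $\Lambda_{2\alpha}$ by \eqref{eq:4}) is exactly that. You are also right that Corollary \ref{cor:4.3} compares $\tilde{A}_{2(N,N')}(K)$ with $A_2(K)$, not with $K$, and only up to $C\,NN'$; the paper's proof addresses neither point.

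\textbf{The bridge to $K$ fails.} A coefficient $\langle F,\psi^j_k\times\psi^{j'}_{k'}\rangle$ annihilates functions of one variable and sees only the mixed (rectangular) increment of $F$. For $F=A_2(K)$ this increment contains $A_2'$ times the mixed increment of $K$. It also contains $A_2''$ times the product of an $x$-increment and a $y$-increment of $K$, and the tensor coefficients of $K$ do not control that second term.
\begin{itemize}
\item Your oscillation bound is true but does not help: $\mathrm{osc}_R K$ neither has the mixed decay the norm needs nor is bounded by $\lVert K\rVert_{(\alpha,p)}$.
\item \eqref{eq:10}, which you invoke for the $A_2''$ term, is false. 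Take $K=g(x)+h(y)$ with $g,h$ smooth and nonconstant, e.g.\ $g(x)=x$, $h(y)=y$. Then $Q^jQ'^{j'}K=0$, but $Q^jP'^{j'}K\cdot P^jQ'^{j'}K=(Q^jg)(Q'^{j'}h)\neq 0$.
\item Definition \ref{def:4.2} sums over wavelet--wavelet pairs only, so this $K$ has $\lVert K\rVert_{(\alpha,p)}=0$. For the paper's own $\varphi_2(t)=t^2$, however, $\langle\varphi_2(K),\psi^j_k\times\psi^{j'}_{k'}\rangle=2\langle g,\psi^j_k\rangle\langle h,\psi^{j'}_{k'}\rangle$, which is nonzero for some indices. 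Hence $\lVert A_2(K)\rVert_{(\alpha,p)}\lesssim\lVert A_2'\rVert_\infty\lVert K\rVert_{(\alpha,p)}$ is false.
\item Worse, the target itself fails for this $K$. By \eqref{eq:2}, $\tilde{\varphi}_{2(N,N')}(K)=2\sum_{j\le N,\,j'\le N'}(Q^jg)(Q'^{j'}h)$, which has positive $(\alpha,p)$-norm. So with the norm as defined, no route can give $\lVert\tilde{A}_{2(N,N')}(K)\rVert_{(\alpha,p)}\le C\lVert K\rVert_{(\alpha,p)}$.
\end{itemize}

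\textbf{The fallbacks for the factor $C\,NN'$ do not work either, even against $A_2(K)$.}
\begin{itemize}
\item Normalizing $A_2$ buys nothing. The map $A\mapsto\tilde{A}_{(N,N')}(f)$ is linear, so the ratio $\lVert\tilde{A}_{2(N,N')}(K)\rVert_{(\alpha,p)}/\lVert A_2(K)\rVert_{(\alpha,p)}$ is invariant under $A_2\mapsto\lambda A_2$.
\item Relation \eqref{eq:7} is false as stated. For Haar, the level-$(j_1,j'_1)$ term restricted to $R=I^{j_1}_{k_1}\times I^{j'_1}_{k'_1}$ is a multiple of $\psi^{j_1}_{k_1}\times\psi^{j'_1}_{k'_1}$. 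It therefore has no coefficients at any $(j_2,j'_2)\neq(j_1,j'_1)$. On the diagonal, where $J_2=J_1$, its coefficient is, up to normalization, $A'(c_R)\langle f,\psi^{j_1}_{k_1}\times\psi^{j'_1}_{k'_1}\rangle$ plus an $A''(c_R)$ term, with $c_R$ the mean of $f$ on $R$. This is generally nonzero.
\item Because the level terms occupy disjoint sets of coefficients, the real missing ingredient is a coefficientwise comparison of these diagonal coefficients with those of $A_2(K)$. Nothing in Corollary \ref{cor:4.3} supplies it.
\item Truncation at $(N,N')$ is no substitute. It says nothing about the coarse coefficients, which is where the norms are compared. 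For $A_2(t)=t$ one has $\tilde{A}_{2(N,N')}(K)=\sum_{j\le N,\,j'\le N'}Q^jQ'^{j'}K$. A truncation below the finest resolution would leave the fine mixed part of $K$ in $\Delta_{(N,N')}$, generically outside $\Lambda_{2\alpha}$. So Theorem \ref{thm:4.1} only makes sense at the finest resolution (or in the limit), where the principal term carries the roughness of $A_2(K)$.
\end{itemize}

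What your argument actually yields, like the paper's one-line proof, is the residual statement together with $\lVert\tilde{A}_{2(N,N')}(K)\rVert_{(\alpha,p)}\le C\,NN'\lVert A_2(K)\rVert_{(\alpha,p)}$, granting Corollary \ref{cor:4.3}.
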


\begin{proof}
Immediate consequence of corollary \ref{cor:4.3} and theorem \ref{thm:4.1}
\end{proof}

\section{Algorithm description and complexity analysis}

\subsection{Algorithm description}

Algorithms \ref{alg:1} and \ref{alg:2} below outline (1) how to construct the TPA in $O(\frac{1}{\epsilon}log(\frac{1}{\epsilon}))$ where $\epsilon$ is the precision specifying the smallest dyadic rectangle following \cite{coifman2011harmonic} and (2) an algorithm for compressing the TPA when off-diagonal decay is present, following \cite{beylkin1991fast}.

\begin{algorithm}[H]
\caption{Compression of coefficient matrices in $O(\frac{1}{\epsilon}log(\frac{1}{\epsilon}))$ for prrecision $\epsilon$}
\label{alg:1}
\begin{algorithmic}[1]
\REQUIRE $ \epsilon \coloneq 2^{-m}, A \in C^2(\mathbb{R}), d(x,y) \in \Lambda_{\alpha}([0,1]^2), \{ \{ \phi^j_k , \psi^j_k \}_{j,k} , \{ \phi^{j'}_{k'} , \psi^{j'}_{k'} \}_{j',k'}  : j + j' = m  \}$
\ENSURE $ \alpha^{m}(d) \coloneq \sum_{ \{ j,j' : m = j + j' \} } (Q^jQ'^{j'}(d)) , \gamma^{m}(d) \coloneq \sum_{ \{ j,j' : m = j + j' \} } (P^jQ'^{j'}(d))$, \\
$  \beta^{m}(d) \coloneq \sum_{ \{ j,j' : m = j + j' \} } (Q^jP'^{j'}(d)), \eta^{m}(d) \coloneq \sum_{ \{ j,j' : m = j + j' \} } (P^jP'^{j'}(d))  $
\FOR{ $j + j' = m$} 
\FOR{ $k = 1, 2, \ldots, 2^j, k' = 1, 2, \ldots, 2^{j'}$}
\STATE $Q^j_{k}Q'^{j'}_{k'}(d) = (<d(x,x'), \psi^j_k(x) \times \psi^{j'}_{k'}(x')>) \chi^{j,j'}_{k,k'}(y,y')$
\STATE $P^j_{k}Q'^{j'}_{k'}(d) =  (<d(x,x'), \phi^j_k(x) \times \psi^{j'}_{k'}(x')>) \chi^{j,j'}_{k,k'}(y,y')$
\STATE $Q^j_{k}P'^{j'}_{k'}(d) =   (<d(x,x'), \psi^j_k(x) \times \phi^{j'}_{k'}(x')>) \chi^{j,j'}_{k,k'}(y,y')$
\STATE $P^j_{k}P'^{j'}_{k'}(d) =  (<d(x,x'), \phi^j_k(x) \times \phi^{j'}_{k'}(x')>) \chi^{j,j'}_{k,k'}(y,y')$
\ENDFOR
\STATE $ Q^{j}Q'^{j'}(d) = \sum_{k,k'=1}^{2^j,2^{j'}} Q^j_{k}Q'^{j'}_{k'}(d), P^{j}Q'^{j'}(d) = \sum_{k,k'=1}^{2^j,2^{j'}} Q^j_{k}Q'^{j'}_{k'}(d), Q^{j}P'^{j'}(d) = \sum_{k,k'=1}^{2^j,2^{j'}} Q^j_{k}Q'^{j'}_{k'}(d), P^{j}P'^{j'}(d) = \sum_{k,k'=1}^{2^j,2^{j'}} Q^j_{k}Q'^{j'}_{k'}(d)$
\ENDFOR
\end{algorithmic}
\end{algorithm}

\begin{algorithm}[H]
\caption{Compression of TPA by discarding coefficients less than $\delta > 0$ for $O(M)$ storage}
\label{alg:2}
\begin{algorithmic}[1]
\REQUIRE $ \delta > 0, \alpha^{m}(d) , \gamma^{m}(d) , \beta^{m}(d), \eta^{m}(d),N_x,N_y$
\ENSURE $ \tilde{A}(f)_{\delta,m} = A'(\eta^{m}(d))\alpha^{m}(d) + A''(\eta^{m}(d))\beta^{m}(d)\gamma^{m}(d) $
\FOR{$i=1,2, \ldots, M_x$} 
\FOR{$l = 1, 2, \ldots, M_y$}
\STATE $\eta^{m}(A',f)[l,i] \leftarrow A'(\eta^{m}(d)[l,i])$, $\eta^{m}(A'',d)[l,i] \leftarrow A''(\eta^{m}(d)[l,i])$
\ENDFOR
\ENDFOR
\STATE $ \tilde{A}(f)_{m} = (\eta^{m}(A',f)[l,i] \odot \alpha^{m}(f)) + (\eta^{m}(A'',f)[l,i] \odot \beta^{m}(f) \odot \gamma^{m}(f)) $
\FOR{$i=1,2, \ldots, M_x$} 
\FOR{$l = 1, 2, \ldots, M_y$}
\IF{ $\texttt{abs}(\tilde{A}(f)_{m}[l,i]) < \delta$ }
\STATE $\tilde{A}(f)_{m}[l,i] = 0 $
\ENDIF
\ENDFOR
\ENDFOR
\STATE $ \tilde{A}(f)_{\delta,m} \leftarrow \tilde{A}(f)_{m} $
\end{algorithmic}
\end{algorithm}

Algorithm 1 is useful in all scenarios where the direct quasilinearization described in section 5 is applied. In algorithm 2, $M_x$ and $M_y$ are the number of samples in the $x$ and $y$ directions; algorithm 2 is useful for situations where the TPA has rapid off-diagonal decay. Recall that we wish to compute the range of $T: f(y) \to \int K(x,y)f(y)$ using the approximation $\tilde{A}(f)_{\delta,m}$ for the kernel, $K(x,y)$. This amounts to computing

\begin{align}
& \int K(x,y)f(y) dy \simeq \int \tilde{A}(d)_{\delta,m}(x,y) + \Delta(A,d)(x,y) f(y) dy \nonumber \\
& \simeq (A'(\eta^{m}(d))\alpha^{m}(d)  +   A''(\eta^{m}(d))\beta^{m}(d)\gamma^{m}(d) + \Delta(A,d)(x,y)) f(y) \nonumber \\
& = \tilde{g}(x)
\label{eq:21}
\end{align}

\subsection{Complexity analysis}

We briefly state informal complexity results which build on existing methods for fast construction of mixed-Holder functions from \cite{coifman2011harmonic} and the FWT method \cite{beylkin1991fast} which exploits rapid off-diagonal decay of kernels by compression of wavelet coefficients.

\begin{enumerate}
    \item Fast construction of $\tilde{A}(f)$: If $A(f) \in \Lambda_{\alpha}([0,1]^2)$, $A(f)$ can be computed in $O(\frac{1}{\epsilon}log(\frac{1}{\epsilon}))$ where $\epsilon$ is the specified precision of a dyadic rectangle \cite{coifman2011harmonic}.
    \item Fast matrix-vector multiplication: thresholding coefficients of $\tilde{A}(f)_m$ less than $\delta$, as seen in algorithm 2, leads to $O(M_x)$ storage, following the FWT method outlined in \cite{beylkin1991fast}
\end{enumerate}

\section{Examples}

Here we discuss two commonly encountered objects in computational physics and mathematics: the potential and fractional Cauchy kernels:

\begin{align}
K_n(x,y) \coloneq log(\sum_{k=0}^n a_k d(x,y)^{-k} ) \qquad K_{\alpha}(r,\theta) \coloneq (re^{i \theta} - z)^{-\alpha} 
\label{eq:22}
\end{align}

We apply the direct and indirect quasilinearization techniques for the potential and fractional Cauchy kernels, respectively and include computational examples for the potential kernel.  

\subsection{Direction quasilinearization: potential kernel}

Consider the potential kernel, $K_n(x,y)$, described above under the conditions of $a_k = 1$ for $ 0 \leq k \leq n $. Consider the scalar function, $\varphi_1$:

\begin{align}
\varphi_1 : f \to log(f)
\label{eq:23}
\end{align}

Then one can understand the potential kernel, $K_n(x,y)$, through the operator $S_{\rho_1}$:

\begin{align}
S_{\rho_1} : d(x,y)^{-(n+1)} \to \varphi_1(d(x,y)^{-(n+1)})
\label{eq:24}
\end{align}

which can be replaced with the tensor paraproduct 

\begin{align}
\Pi^{N,N'}_{\varphi'_1 \varphi^{''}_1} : d(x,y)^{-(n+1)} \to \tilde{\varphi}_{1_{(N,N')}}(d(x,y)^{-(n+1)}) + \Delta_{(N,N')}(\tilde{\varphi}_1, d(x,y)^{-(n+1)})
\label{eq:25}
\end{align}

by appealing to proposition \ref{prop:5.1}. Computing $\tilde{\varphi}_1(d(x,y)^{-(n+1)})$ (where we suppress $d(x,y)$ such that $d\coloneq d(x,y)$) gives:

\begin{align}
\tilde{\varphi}_1(d(x,y)^{-(n+1)}) \coloneq \sum_{j,j'=0}^{N,N'} P^jP'^{j'}(d^{-(n+1)})^{-1}Q^jQ'^{j'}(d^{-(n+1)}) - P^jP'^{j'}(d^{-(n+1)})^{-2}Q^jP'^{j'}(d^{-(n+1)})P^jQ'^{j'}(d^{-(n+1)})
\label{eq:26}
\end{align}

\begin{remark}
The scalar function, $\varphi_1$, permits one to replace the log function with its derivatives. This is crucial since the off-diagonal decay of $K_n(x,y)$ is slow when the support of log is greater than 1. Replacing $S_{\varphi_1}$ with  $\Pi^{N,N'}_{\varphi'_1 \varphi^{''}_1}$ allows one to exploit the properties of the fast off-diagonal decay of $d^{-(n+1)}$, leading to matrix compression.
\label{rem:7.1}
\end{remark}

\subsection{Indirect quasilinearization: fractional Cauchy kernel}

Consider the scalar function $\varphi_2 : f \to f^2$. Pointwise application of $\varphi_2$ to $K_{\alpha}(r,\theta)$ results in a new kernel which can be understood by the operator: $S_{\varphi_2}$ 

\begin{align}
S_{\varphi_2} : K_{\alpha}(r,\theta) \to \varphi_2(K_{\alpha}(r,\theta))
\label{eq:27}
\end{align}

Direct quasilinearization of $S_{\varphi_2}$ (and indirect quasilinearization of $K_{\alpha}(r,\theta)$) permits one to replace $S_{\varphi_2}$ with the tensor paraproduct 

\begin{align}
\Pi^{N,N'}_{\varphi'_2 \varphi^{''}_2} : K_\alpha(r,\theta) \to \tilde{\varphi}_2(K_\alpha(r,\theta)) + \Delta_{(N,N')}(\tilde{\varphi}_2, K_\alpha(r,\theta))
\label{eq:28}
\end{align}

since $\varphi_2 \in C^2(\mathbb{R})$ and $K_\alpha(r,\theta) \in \Lambda_{\alpha}$. computing the principal term, $\tilde{\varphi}_2(K_\alpha(r,\theta))$, yields

\begin{align}
\tilde{\varphi}_2(K_\alpha(r,\theta)) = \sum_{j,j'=0}^{N,N'} 2 P^jP'^{j'}(K_\alpha(r,\theta) )Q^jQ'^{j'}(K_\alpha(r,\theta)) + Q^jP'^{j'}(K_\alpha(r,\theta)) P^jQ'^{j'}(K_\alpha(r,\theta))
\label{eq:29}
\end{align}

\begin{remark}
The indirect quasilinearization of $K_{\alpha}(r,\theta)$ leads to enhanced regularity ( since $\Delta_{(N,N')}(\tilde{\varphi}_2, K_\alpha(r,\theta)) \in \Lambda_{2\alpha}$ by Theorem \ref{thm:4.1}) with a cost associated with inverting $\varphi_2$.
\label{rem:7.2}
\end{remark}

\section{Numerical results: regularization and compression of potential kernel with direct quasilinearization}

Here we apply the direct quasilinearization method to the potential kernel, $K_n(x,y)$. We (1) describe how the data was generated and under what conditions (2) numerically verify that the TPA, $\tilde{\varphi}_{1_{(N,N')}}(d(x,y)^{-(n+1)})$, has more local regularity than the original potential kernel, $K_n(x,y)$, in the sense of the Besov norm, and (3) showcase results regarding the compression of TPA, $\tilde{\varphi}_{1_{(N,N')}}(d(x,y)^{-(n+1)})$.

\subsection{Raw data and tensor paraproduct decomposition}

For all experiments we consider a potential kernel which is the logarithm of a degree 5 polynomial i.e. $K_{5}(d(x,y))$. The degree 5 polynomial is used to avoid the numerical difficulties associated with selecting the appropriate class of vanishing moments of wavelets which to ensure sufficient off-diagonal decay following direct quasilinearization of $K_{5}(d(x,y))$. To build $K_{5}(d(x,y))$, we first construct the affinity, $d(x,y) : [0,1]^2 \to \lVert \vec{x} - \vec{y} \rVert_{l^2([0,1]^2)}$ where $\vec{x} = (x_1,x_2)$ and $\vec{y} = (y_1,y_2)$. We sample $M$ points, $( \vec{x}_i , \vec{y}_i \}_{i=1}^{M}$, to match the dimension of the tensor we wish to construct. Here we only consider $M = \{ 128, 256, 512 \}$. The data generation process for our experiments are described below in Figure 1.

\begin{figure}[H]
    \centering
    \includegraphics[width=0.6\textwidth]{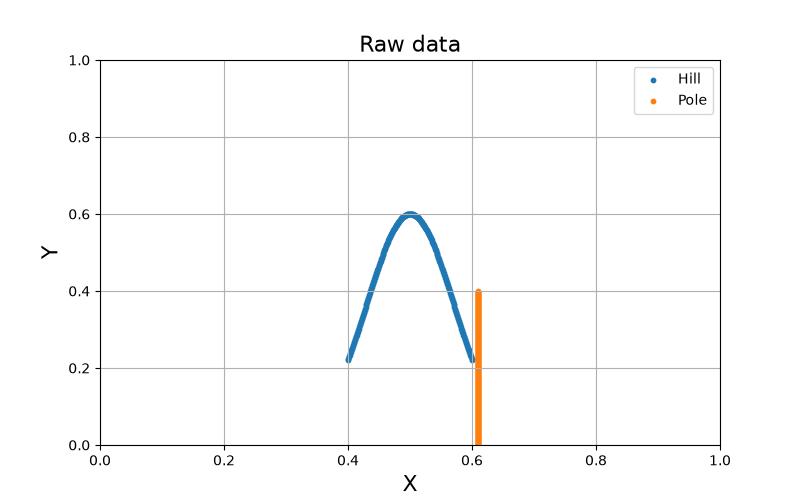}
    \caption{Data generation $\{ \vec{x}_i\}_{i=1}^{256}, \{ \vec{y}_i \}_{i=1}^{256} \in [0,1]^2$ generated for the potential kernel with $M = 256$ points. The blue bell curve is generated with the coordinates $( x_1(j) = 0.4 + j\frac{0.2}{256} ,x_2 = e^{-(\frac{x_1(j) - 0.5}{0.1})^2}0.6)$ where $j = 0 ,\ldots, 255$. The orange pole is generated with coordinates $(y_1 = 0.61 , y_2(j) = j\frac{0.4}{256}), j=0,\ldots, 255$.}
    \label{fig:1}.
\end{figure}

The support of $\tilde{\varphi_1}$ is then computed by collapsing the geometric series, $\sum_{n=0}^{5} d(x,y) \simeq d(x,y)^{-5}$. The approximation is then computed in $O(\frac{1}{\epsilon}log(\frac{1}{\epsilon}))$ time using algorithm \ref{alg:1}, where $\epsilon = 2^{-(j_1 + j'_1)}$, where $ j_1, j'_1 \in \mathbb{N}$ are the scaling parameters associated with the decomposition and $\epsilon$ is the precision at which we compute the approximation. Since the approximation can only be computed if $d(x,y)^{-24} \in \Lambda_{\alpha}([0,1]^2)$ (by the regularity assumptions of Theorem \ref{thm:4.1}), we illustrate this phenomenon by showing the wavelet coefficients of $d(x,y)^{-5}$ decay exponentially. In particular, we compute the wavelet expansion of coefficients of $d(x,y)^{-5}$ with $d(x,y)^{-5} = \sum_{j,j',k,k'} \alpha^{j,j'}_{k,k'} \psi^j_k(x) \times \psi^{j'}_{k'}(x')$ and plot $|\alpha^{j,j'}_{k,k'} |$ verify the decay satisfies $ | \alpha^{j,j'}_{k,k'} | \leq 2^{-(j + j')} $ \cite{stephane1999wavelet}. The TPA, $\tilde{\varphi}_{1_{(N,N')}}(d(x,y)^{-(n+1)})$, is consequently computed with the formula 

\begin{align}
& \tilde{\varphi}_{1_{J_1}}(d(x,y)^{-(n+1)}) = \sum_{j_1,j_1' : j_1 + j'_1 = J_1} P^{j_1}P'^{j_1'}(d^{-(n+1)})^{-1}Q^{j_1}Q'^{j_1'}(d^{-(n+1)}) - \nonumber \\
& P^{j_1}P'^{j_1'}(d^{-(n+1)})^{-2}Q^{j_1}P'^{j_1'}(d^{-(n+1)})P^{j_1}Q'^{j_1'}(d^{-(n+1)})
\label{eq:30}
\end{align}

where $J_1$ specifies the size of the dyadic rectangles satisfying precision $\epsilon = 2^{-J_1}$, and $Q^{j_1}Q'^{j_1'}, P^{j_1}Q'^{j_1'}, Q^{j_1}P'^{j_1'}$ and $P^{j_1}P'^{j_1'}$ are identified with projections into the tensor wavelet, scaling-wavelet, wavelet-scaling, and scaling bases for all dyadic rectangles satisfying precision of $\epsilon = 2^{J_1}$. We identify $\tilde{\varphi}_{1_{(N,N')}}(d(x,y)^{-(n+1)}) $ from the previous section with $\tilde{\varphi}_{1_{J_1}}(d(x,y)^{-(n+1)}) $ to denote the precision level.

\begin{figure}[H]
    \centering
    \includegraphics[width=0.95\textwidth]{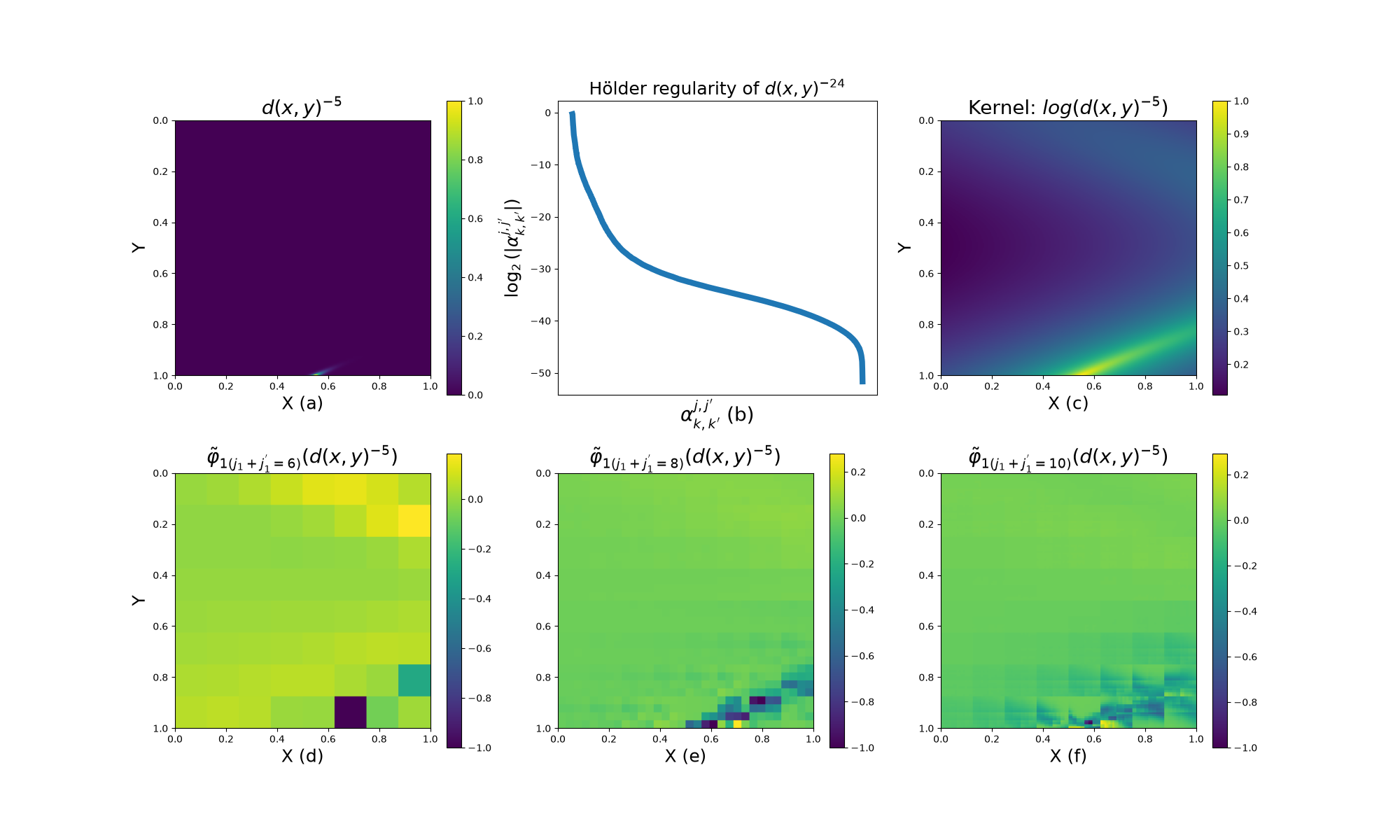}
    \caption{Top row: left to right; (a) 5th degree polynomial of the distance matrix, $d(x,y)^{-5}$ (b) log-scale plot of wavelet coefficients, $\alpha^{j,j'}_{k,k'}$ of $d(x,y)^{-5}$ (c) original potential kernel, $K_{5}(d(x,y))$. Bottom row: left to right; (d) TPA of $K_{5}(d(x,y))$ for precision $\epsilon = 2^{-6}$ (e) TPA of $K_{5}(d(x,y))$ for precision $\epsilon = 2^{-8}$ (f) TPA of $K_{5}(d(x,y))$ for precision $\epsilon = 2^{-10}$.
    Subplots (a), (c) , (d-f) are normalized relative to the maximum absolute value. The rapid off-diagonal decay for $d(x,y)^{-5}$ is seen in (a) while the slow off-diagonal decay is seen in (c), as expected since the range of $d(x,y)^{-5} > 1$ In (b) we see $d(x,y)^{-5} \in \Lambda_{\alpha}$ since linear decay is seen on the log-scale plot. The resolution of the TPA increases as one goes from left to right on the bottom row (d-f), which is expected since the precision becomes more fine scale. }
    \label{fig:2}
\end{figure}

\subsection{Local regularity}

The Besov norm (see definition \ref{def:4.2}) can be used to determine the local regularity gains of $\tilde{\varphi}_{1_{J_1}}(d(x,y)^{-5})$ by measuring the relative variation. We expect the TPA, $\tilde{\varphi}_{1_{J_1}}(d(x,y)^{-5})$, to have increased local regularity, for analytical reasons (discussed in corollary \ref{cor:4.3}) and conceptual reasons (the TPA is constructed partially by averaging over dyadic rectangles). We use Table 2 to numerically verify our theoretical findings; in particular, we measure the relative variation by computing the relative Besov norm of the TPA to the original kernel, $K_{5}(d(x,y))$. Values less than 1 are expected for all conditions since lower values of $\lVert \cdot \rVert_{(\alpha,p)}$ correspond to more regularity. 

\begin{table}[H]
\centering
\caption{  Relative variation $ \lVert \tilde{\varphi}_{1_{J_1}}(d(x,y)^{5}) \rVert_{ (\alpha,1) } / \lVert A(f) \rVert_{ (\alpha,1) }   $ }
\begin{tabular}{c|ccc}
\toprule
 & $\alpha = 5e-3$ & $\alpha = 5e-2$ & $\alpha = 5e-1$ \\
\midrule
 & \multicolumn{3}{c}{M = 128} \\
\midrule
$ \epsilon = 7.81 \times 10^{-3} $& $4.46 \times 10^{-1}$ & $4.74 \times 10^{-1}$ & $4.89 \times 10^{-1}$ \\
$ \epsilon = 3.91 \times 10^{-3} $& $4.32 \times 10^{-1}$ & $4.71 \times 10^{-1}$ & $4.96 \times 10^{-1}$ \\
$ \epsilon = 1.95 \times 10^{-3} $& $2.91 \times 10^{-1}$ & $4.17 \times 10^{-1}$ & $5.60 \times 10^{-1}$ \\
\midrule
 & \multicolumn{3}{c}{M = 256} \\
\midrule
$ \epsilon = 7.81 \times 10^{-3} $ & $4.04 \times 10^{-1}$ & $5.88 \times 10^{-1}$ & $6.30 \times 10^{-1}$ \\
$ \epsilon = 3.91 \times 10^{-3} $ & $3.93 \times 10^{-1}$ & $5.82 \times 10^{-1}$ & $6.38 \times 10^{-1}$ \\
$ \epsilon = 9.77 \times 10^{-4} $ & $2.72 \times 10^{-1}$ & $4.99 \times 10^{-1}$ & $7.11 \times 10^{-1}$ \\
\midrule
 & \multicolumn{3}{c}{M = 512} \\
\midrule
$ \epsilon = 1.95 \times 10^{-3} $ & $4.52 \times 10^{-1}$ & $8.21 \times 10^{-1}$ & $9.81 \times 10^{-1}$ \\
$ \epsilon = 9.77 \times 10^{-4} $ & $4.38 \times 10^{-1}$ & $8.09 \times 10^{-1}$ & $9.83 \times 10^{-1}$ \\
$ \epsilon =  4.88 \times 10^{-4} $ & $2.93 \times 10^{-1}$ & $6.67 \times 10^{-1}$ & $9.84 \times 10^{-1}$ \\
\hline
\end{tabular}
\caption{ Each row represents a different precision level given by $\epsilon = 2^{-(j_1 + j'_1)}$ that the TPA,  $\tilde{\varphi}_{1_{J_1}}(d(x,y)^{-5})$ is computed to. We also perturb the $\alpha$ level of the Besov norm in each column and display results for different dimensions $M = 128, 256, 512$. The relative variation is less than 1, meaning the numerical verification of our analytical results of corollary \ref{cor:4.3} are invariant to the experimental parameters of $(\epsilon,\alpha,M)$, confirming increased local regularity gains of $\tilde{\varphi}_{1_{J_1}}(d(x,y)^{-5})$}.
\label{tab:1}
\end{table}

\subsection{Compression}

Recall from equation \ref{eq:25} that $K_n(x,y)$ has the representation $\tilde{\varphi}_{1_{(N,N')}}(d(x,y)^{-(n+1)}) + \Delta_{(N,N')}(\tilde{\varphi}_1, d(x,y)^{-(n+1)})$ following direct quasilinearization. We wish to compute $g(x) = \int \tilde{\varphi}_{1_{(N,N')}}(d(x,y)^{-5}) + \Delta_{(N,N')}(\tilde{\varphi}_1, d(x,y)^{-5}) f(y)$ rapidly by thresholding matrix entries of $\tilde{\varphi}_{1_{(N,N')}}(d(x,y)^{-5}) $ less than $\delta$ such that we end up with the matrix vector multiplication, $g(x) = \int \tilde{\varphi}_{1_{(N,N')}}(d(x,y)^{-5})_{\delta} + \Delta_{(N,N')}(\tilde{\varphi}_1, d(x,y)^{-5}) f(y)$. We generate 3 test functions, $f_1 = e^{-\frac{x - 0.5}{0.015}},x \in [0,1], f_2 = sin(x), x \in [0, 20 \pi] , f_3 = M(\frac{x}{M})^3 , x \in [0,1] $ (where $M$ is the tensor dimension for $f_3$) and compute the following matrix vector multiplications

\begin{align}
g_1(x) = (\int \tilde{\varphi}_{1_{\epsilon}}(d(x,y)^{-5})_{\delta} + \Delta_{\epsilon}(\tilde{\varphi}_1, d(x,y)^{-5})) f_1(y)
\label{eq:31}
\end{align}

\begin{align}
g_2(x) = \int (\tilde{\varphi}_{1_{\epsilon}}(d(x,y)^{-5})_{\delta} + \Delta_{\epsilon}(\tilde{\varphi}_1, d(x,y)^{-5})) f_2(y)
\label{eq:32}
\end{align}

\begin{align}
g_3(x) = \int (\tilde{\varphi}_{1_{\epsilon}}(d(x,y)^{-5})_{\delta} + \Delta_{\epsilon}(\tilde{\varphi}_1, d(x,y)^{-5})) f_3(y)
\label{eq:33}
\end{align}

for different combinations of precision, $\epsilon$, compression threshold, $\delta$ , and different dimensions $M = 128, 256, 512$ and visualize the test functions, $f_1, f_2, f_3$, and the outputs, $g_1, g_2, g_3$, in Figure \ref{fig:3} and report the relative error and compression rations in Table \ref{tab:2}

\begin{figure}[H]
    \centering
    \includegraphics[width=0.95\textwidth]{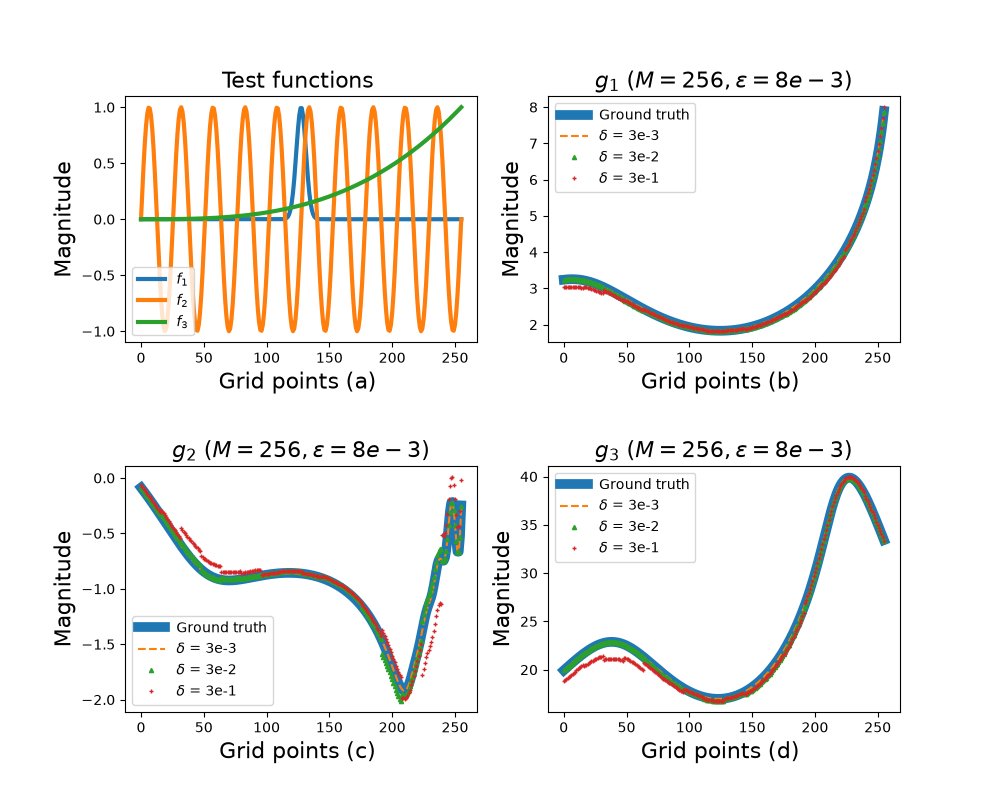}
    \caption{Potential kernel}
    \label{fig:3}
    \caption{(a) depicts the test functions, $f_1,f_2, f_3$, (b-d) depict $g_1,g_2,g_3$, respectively, which are the outputs of the matrix vector multiplications in the equations defined above, for different compression thresholds, $\delta = 3e-1,3e-2,3e-3$, following normalization with respect to the maximum of the absolute value of = $\tilde{\varphi}_{1_{\epsilon=8e-3}}(d(x,y)^{-5})$. The ground truth, $g(x) = \int K_5(x,y)f(y)$ is also included for $f_1,f_2,f_3$ in each plot. Qualitatively, one can see increasing the compression threshold, $\delta$, results in a less accurate compression.}
\end{figure}

\begin{table}[H]
\centering
\caption{Relative $L^2$, $L^{\infty}$ errors and compression rate (CR), for $ \delta = 3e-1$, and various precision levels, $\epsilon$, for test function $f_1$}.
\begin{tabular}{c|ccc}
\toprule
 & $ \lVert g_1 - \tilde{g_1} \rVert_{L^2} / \lVert g_1 \rVert_{L^2} $ & $ \lVert g_1 - \tilde{g_1} \rVert_{L^\infty} / \lVert g_1 \rVert_{L^\infty} $ & CR \\
\midrule
 & \multicolumn{3}{c}{M=128} \\
\midrule
$ \epsilon = 1.56\times10^{-2}$ & $5.31 \times 10^{-2}$ & $4.86 \times 10^{-2}$ & $6.40 \times 10^{1}$ \\
$ \epsilon = 7.81\times10^{-3} $ & $2.86 \times 10^{-2}$ & $2.73 \times 10^{-2}$ & $3.66 \times 10^{1}$ \\
$\epsilon = 3.91\times10^{-3} $ & $1.85 \times 10^{-2}$ & $1.50 \times 10^{-2}$ & $4.45 \times 10^{1}$ \\
\midrule
 & \multicolumn{3}{c}{M=256} \\
\midrule
$ \epsilon = 7.81\times10^{-3}$ & $2.78 \times 10^{-2}$ & $2.71 \times 10^{-2}$ & $4.27 \times 10^{1}$ \\
$ \epsilon = 3.91\times10^{-3}$ & $1.85 \times 10^{-2}$ & $1.49 \times 10^{-2}$ & $3.79 \times 10^{1}$ \\
$ \epsilon = 1.95\times10^{-3}$ & $2.67 \times 10^{-2}$ & $3.10 \times 10^{-2}$ & $ 9.31 \times 10^{1}$ \\
\midrule
 & \multicolumn{3}{c}{M=512} \\
\midrule
$ \epsilon = 3.91\times10^{-3} $  & $1.85 \times 10^{-2}$ & $1.48 \times 10^{-2}$ & $3.66 \times 10^{1}$ \\
$ \epsilon = 1.95\times10^{-3} $  & $2.53 \times 10^{-2}$ & $2.77 \times 10^{-2}$ & $9.75 \times 10^{1}$ \\
$ \epsilon = 9.77\times10^{-4} $ & $3.05 \times 10^{-2}$ & $3.24 \times 10^{-2}$ & $4.42 \times 10^{1}$ \\
\bottomrule
\end{tabular}
\caption{Included in the table are relative $L^2$ and $L^{\infty}$ errors for between the ground truth output, $g_1$, and the output associated with the compressed TPA, $\tilde{g}_1$, with $\delta = 0.3$. Also included is the compression ratio, CR, (ratio of the dense to sparse matrix). We perturb the precision $\epsilon$ as well as the dimension, $M$. }
\label{tab:2}
\end{table}

\section{Appendix}

Included here is an alternative direct quasilinearization of the potential kernel, $K_n(x,y)$, in the constant and variable coefficient cases. For this quaslinearization we consider seperating $K_n(x,y)$ into two non-linear transformations acting on $d(x,y)$ instead of one non-linear transformation acting on $\sum_{k=0}^{-(n+1)} a_k d(x,y)^k$. This perspective may be useful for situations where one wants to mitigate the effect of singularities of $d(x,y)$.

\subsubsection{Constant coefficient potential}

Consider $K_n(x,y)$ with $a_k = 1$ for $ 0 \leq k \leq n$. If we split $K_n(x,y)$ with $a_k = 1$ in terms of the operators $\varphi_{1,c}$ and $\varphi_2$, 

\begin{align}
\varphi_{1,c} : f \to \sum_{k=0}^n f^{-k} \qquad \varphi_2 : f \to \log(f)
\label{eq:34}
\end{align}

then the composition, $\varphi_3 \coloneq \varphi_2(\varphi_{1,c})$ permits us to understand $K_n(x,y)$ in terms of the operator $S_{(1,c),K_n}$:

\begin{align}
S_{(1,c),K_n}: d(x,y) \to \varphi_{3,c}(d(x,y))
\label{eq:35}
\end{align}

By proposition \ref{prop:5.1}, we can replace the operator, $S_{1,K_n}$, with the tensor paraproduct, $\Pi^{N,N'}_{\varphi'_{3,c} \varphi^{''}_{3,c}}$.

\begin{align}
\Pi^{N,N'}_{\varphi'_{3,c} \varphi^{''}_{3,c}} : d(x,y) \to \tilde{\varphi}_{3,c(N,N')}(d(x,y)) + \Delta_{(N,N')}(\varphi_{3,c}, d(x,y))
\label{eq:36}
\end{align}

and the approximation, $\tilde{\varphi}_{3,c(N,N')}(d(x,y))$, is obtained by computing the derivatives of $\tilde{\varphi}_{3,c(N,N')}(d(x,y))$. Collapsing the geometric series gives $\sum_{k=0}^n d(x,y)^{-k} \simeq d(x,y)^{-(n+1)} \epsilon^{-1}$ since $ d(x,y)^{-(n+1)} >> 1 $ such that 

\begin{align}
& \frac{d}{ d P^jP'^{j'}(d(x,y))} \varphi_{3,c(N,N')}(P^jP'^{j'}(d(x,y))) = \frac{-(n+1) \epsilon^{-1} P^jP'^{j'}(d(x,y))^{-(n+2)}}{\epsilon^{-1} P^jP'^{j'}(d(x,y))^{-(n+1)}}  \nonumber \\
&  = -(n + 1) P^jP'^{j'}(d(x,y))^{-1}
\label{eq:37}
\end{align}

\begin{align}
& \frac{ d^2 }{ d^2 P^jP'^{j'}(d(x,y))} \varphi_{3,c(N,N')}(P^jP'^{j'}(d(x,y))) = \frac{ d^2 }{ d^2 P^jP'^{j'}(d(x,y))} (-(n + 1) P^jP'^{j'}(d(x,y))^{-1} )  \nonumber \\
&  = (n + 1) P^jP'^{j'}(d(x,y))^{-2}
\label{eq:38}
\end{align}

giving the approximation

\begin{align}
\tilde{\varphi}_{3,c(N,N')}(d(x,y)) = \sum_{j,j'=0}^{N,N'} (n + 1)(P^jP'^{j'}(d(x,y))^{-2}Q^j Q'^{j'}(d(x,y)) - P^jP'^{j'}(d(x,y))^{-1}Q^jP'^{j'}(d(x,y))P^jQ'^{j'}(d(x,y)))
\label{eq:39}
\end{align}

\begin{remark}
Observe the terms $P^jP'^{j'}(d(x,y))^{-2}$ and $P^jP'^{j'}(d(x,y))^{-1}$in the preceeding formula lead to faster off-diagonal decay than the original kernel $K_n(x,y)$. 
\label{rem:9.1}
\end{remark}

A similar approximation is obtained when $a_k(x,y)$ in \ref{eq:1} is variable with mild regularity assumptions:

\subsubsection{Variable coefficient polynomial}

Consider $K_n(x,y)$ with $a_k(x,y) \in C^2(\mathbb{R})$, $a_k : [0,1] \to [-1,1]$ for $ 0 \leq k \leq n$. Again, we split the operators into $\varphi_{1,v}$ and $\varphi_2$,

\begin{align}
\varphi_{1,v} : f \to \sum_{k=0}^n f^{-k} \qquad \varphi_2 : f \to \log(f)
\label{eq:40}
\end{align}

then the composition, $\varphi_{3,v} \coloneq \varphi_2(\varphi_{1,v})$ permits us to understand $K_n(x,y)$ in terms of the operator $S_{(1,v),K_n}$:

\begin{align}
S_{(1,v),K_n}: d(x,y) \to \varphi_{3,v}(d(x,y))
\label{eq:41}
\end{align}

By proposition \ref{prop:5.1}, we can replace the operator, $S_{1,K_n}$, with the tensor paraproduct, $\Pi^{N,N'}_{\varphi'_{3,c} \varphi^{''}_{3,c}}$.

\begin{align}
\Pi^{N,N'}_{\varphi'_{3,v} \varphi^{''}_{3,v}} : d(x,y) \to \tilde{\varphi}_{3,v(N,N')}(d(x,y)) + \Delta_{(N,N')}(\varphi_{3,v}, d(x,y))
\label{eq:42}
\end{align}

and the approximation, $\tilde{A}_{(3,v)(N,N')}(d(x,y))$ is:

\begin{align}
& \tilde{\varphi}_{(3,v)(N,N')}(d(x,y)) = -(n+1)(P^jP'^{j'}(d(x,y)))^{-1}Q^jQ'^{j'}(d(x,y)) + \nonumber \\
& (((\epsilon - 1) (n^2 + 2n + 2) P^jP'^{j'}(d(x,y))^{-2}) + nP^jP'^{j'}(d(x,y))^{-2})Q^jP'^{j'}(d(x,y))P^jQ'^{j'}(d(x,y))
\label{eq:43}
\end{align}

Computing the derivatives $\varphi_{3,v}'$ and $\varphi_{3,v}''$ yields:

\begin{align}
& \frac{d \tilde{\varphi}_{(3,v)(N,N')} (P^jP'^{j'}(d(x,y)))}{ d P^jP'^{j'}(d(x,y)) }= \frac{\frac{d p_n( P^jP'^{j'}(d(x,y))) }{d P^jP'^{j'}(d(x,y)) }}{p_n(P^jP'^{j'}(d(x,y)))} \nonumber \\
& = \frac{-(n+1)a_{n+1} \epsilon^{-1}(P^jP'^{j'}(d(x,y)))^{-(n+2)}}{a_{n+1}(P^jP'^{j'}(d(x,y)))^{-(n+1)}\epsilon^{-1} - a_0\epsilon^{-1}}
\label{eq:44}
\end{align}

and 

\begin{align}
& \frac{ d^2 \tilde{\varphi}_{(3,v)(N,N')}(P^jP'^{j'}(d(x,y)))}{d^2 P^jP'^{j'}(d(x,y))} =  \frac{ p_n( P^jP'^{j'}(d(x,y))) \frac{d^2 p_n( P^jP'^{j'}(d(x,y))) }{d^2 P^jP'^{j'}(d(x,y))} - (\frac{d p_n( P^jP'^{j'}(d(x,y))) }{d P^jP'^{j'}(d(x,y)) })^2 }{(p_n(P^jP'^{j'}(d(x,y))))^2} \nonumber \\
 & = \frac{(n^2 + 3n + 2) \epsilon^{-1} a_{n+1}^2 P^jP'^{j'}(d(x,y)))^{-(2n+4)} }{((a_{n+1} P^jP'^{j'}(d(x,y))^{-(n+1)} - a_0) \epsilon^{-1})^2} - \frac{(n^2 + 3n + 2) \epsilon^{-1} a_{n+1} a_0 P^jP'^{j'}(d(x,y)))^{-(n+1)} }{((a_{n+1} P^jP'^{j'}(d(x,y))^{-(n+1)} - a_0) \epsilon^{-1})^2 } - \nonumber \\
 & \frac{(-(n+1) \epsilon^{-1}  a_{n+1} P^jP'^{j'}(d(x,y))^{-(n+2)})^2}{((a_{n+1} P^jP'^{j'}(d(x,y))^{-(n+1)} - a_0) \epsilon^{-1})^2 }
 \label{eq:45}
\end{align}

Under the assumptions on $a_k$ we can simplify $\varphi'_{(3,v)}$:

\begin{align}
& \frac{d \tilde{\varphi}_{(3,v)(N,N')} (P^jP'^{j'}(d(x,y)))}{ d P^jP'^{j'}(d(x,y)) } \simeq \frac{(-n+1)a_{n+1} \epsilon^{-1}(P^jP'^{j'}(d(x,y)))^{-(n+2)}}{a_{n+1} \epsilon^{-1}(P^jP'^{j'}(d(x,y)))^{-(n+1)}} \nonumber \\
& = -(n+1)(P^jP'^{j'}(d(x,y)))^{-1}
\label{eq:46}
\end{align}

which follows since $a_{n+1} P^jP'^{j'}(d(x,y))^{-(n+1)}\epsilon^{-1} >> a_0 \epsilon^{-1} $ since the range of $a_k$ is in $[-1,1]$. The same reasoning permits us to simplify $A''_3$: 

\begin{align}
& \frac{ d^2 \tilde{\varphi}_{(3,v)(N,N')}(P^jP'^{j'}(d(x,y)))}{d^2 P^jP'^{j'}(d(x,y))} \simeq  \frac{(n^2 + 3n + 2) \epsilon^{-1} a_{n+1}^2 P^jP'^{j'}(d(x,y)))^{-(2n+4)} }{ ((\epsilon^{-1}  a_{n+1} P^jP'^{j'}(d(x,y))^{-(n+1))^2})^2}  \nonumber \\
& - \frac{(n^2 + 3n + 2) \epsilon^{-1} a_{n+1} a_0 P^jP'^{j'}(d(x,y)))^{-(n+1)} }{  (\epsilon^{-1}  a_{n+1} P^jP'^{j'}(d(x,y))^{-(n+1)})^2  } - \frac{(-(n+1) a_{n+1} P^jP'^{j'}(d(x,y))^{-(n+2)})^2}{ (\epsilon^{-1}  a_{n+1} P^jP'^{j'}(d(x,y))^{-(n+1)})^2  } \nonumber \\
&= \frac{(n^2 + 3n + 2) \epsilon^{-1} a_{n+1}^2 P^jP'^{j'}(d(x,y)))^{-(2n+4)} }{ \epsilon^{-2}  a_{n+1}^2 P^jP'^{j'}(d(x,y))^{-(2n+2)}}  - \frac{(n^2 + 3n + 2)a_0}{\epsilon^{-1}  a_{n+1} P^jP'^{j'}(d(x,y))^{-(n+1)} }  - \nonumber \\
 & \frac{((n^2 + 2n + 2) \epsilon^{-2}  a_{n+1}^2 P^jP'^{j'}(d(x,y))^{-(2n+4)})}{\epsilon^{-2}  a_{n+1}^2 P^jP'^{j'}(d(x,y))^{-(2n+2)} } \nonumber \\
& = (n^2 + 3n + 2)\epsilon (P^jP'^{j'}(d(x,y))^{-2}  - a_0a_{n+1}^{-1}P^jP'^{j'}(d(x,y))^{(n+1)}) - (n^2 + 2n + 2)P^jP'^{j'}(d(x,y))^{-2} \nonumber \\
& = (\epsilon - 1) (n^2 + 2n + 2) (P^jP'^{j'}(d(x,y))^{-2} + n (P^jP'^{j'}(d(x,y))^{-2} - a_0a_{n+1}^{-1}P^jP'^{j'}(d(x,y))^{(n+1)}) \nonumber \\
& \simeq (\epsilon - 1) (n^2 + 2n + 2) (P^jP'^{j'}(d(x,y))^{-2} + n(P^jP'^{j'}(d(x,y))^{-2}
\label{eq:47}
\end{align}

where the last step holds since $P^jP'^{j'}(d(x,y))^{-2} >> P^jP'^{j'}(d(x,y))^{(n+1)}$. The last two steps permit us to compactly write the tensor paraproduct approximation:

\begin{align}
& \tilde{\varphi}_{(3,v)(N,N')}(d(x,y)) = \sum_{j,j'=0}^{N,N'} -(n+1)(P^jP'^{j'}(d(x,y)))^{-1}Q^jQ'^{j'}(d(x,y)) + \nonumber \\
& (((\epsilon - 1) (n^2 + 2n + 2) P^jP'^{j'}(d(x,y))^{-2}) + nP^jP'^{j'}(d(x,y))^{-2})Q^jP'^{j'}(d(x,y))P^jQ'^{j'}(d(x,y))
\label{eq:48}
\end{align}

\bibliographystyle{amsplain}
\bibliography{main}

@misc{stephane1999wavelet,
  title={A wavelet tour of signal processing},
  author={Stephane, Mallat},
  year={1999},
  publisher={Elsevier}
}

@article{fasina2025quasilinearization,
  title={Quasilinearization with regularizing tensor paraproducts},
  author={Fasina, Oluwadamilola},
  journal={arXiv preprint arXiv:2503.12629},
  year={2025}
}

@article{fasina2025d,
  title={D-tensor paraproducts and its caricatures},
  author={Fasina, Oluwadamilola},
  journal={arXiv preprint arXiv:2508.13322},
  year={2025}
}

@article{fasina2026hierarchical,
  title={Hierarchical paraproducts},
  author={Fasina, Oluwadamilola},
  journal={arXiv preprint arXiv:2602.16644},
  year={2026}
}

@inproceedings{bony1981calcul,
  title={Calcul symbolique et propagation des singularit{\'e}s pour les {\'e}quations aux d{\'e}riv{\'e}es partielles non lin{\'e}aires},
  author={Bony, Jean-Michel},
  booktitle={Annales scientifiques de l'{\'E}cole normale sup{\'e}rieure},
  volume={14},
  number={2},
  pages={209--246},
  year={1981}
}

@inproceedings{grengard2006rapid,
  title={The rapid evaluation of potential fields in three dimensions},
  author={Grengard, Leslie and Rokhlin, Vladimir},
  booktitle={Vortex Methods: Proceedings of the UCLA Workshop held in Los Angeles, May 20--22, 1987},
  pages={121--141},
  year={2006},
  organization={Springer}
}

@article{greengard2021fast,
  title={Fast multipole methods for the evaluation of layer potentials with locally-corrected quadratures},
  author={Greengard, Leslie and O'Neil, Michael and Rachh, Manas and Vico, Felipe},
  journal={Journal of Computational Physics: X},
  volume={10},
  pages={100092},
  year={2021},
  publisher={Elsevier}
}

@article{vico2016decoupled,
  title={The decoupled potential integral equation for time-harmonic electromagnetic scattering},
  author={Vico, Felipe and Ferrando, Miguel and Greengard, Leslie and Gimbutas, Zydrunas},
  journal={Communications on pure and applied mathematics},
  volume={69},
  number={4},
  pages={771--812},
  year={2016},
  publisher={Wiley Online Library}
}

@article{ankenman2018mixed,
  title={Mixed H{\"o}lder matrix discovery via wavelet shrinkage and Calder{\'o}n--Zygmund decompositions},
  author={Ankenman, Jerrod and Leeb, William},
  journal={Applied and Computational Harmonic Analysis},
  volume={45},
  number={3},
  pages={551--596},
  year={2018},
  publisher={Elsevier}
}

@article{beylkin1991fast,
  title={Fast wavelet transforms and numerical algorithms I},
  author={Beylkin, Gregory and Coifman, Ronald and Rokhlin, Vladimir},
  journal={Communications on pure and applied mathematics},
  volume={44},
  number={2},
  pages={141--183},
  year={1991},
  publisher={Wiley Online Library}
}

@article{barnett2011new,
  title={A new integral representation for quasi-periodic scattering problems in two dimensions},
  author={Barnett, Alex and Greengard, Leslie},
  journal={BIT Numerical mathematics},
  volume={51},
  number={1},
  pages={67--90},
  year={2011},
  publisher={Springer}
}

@article{gimbutas2003generalized,
  title={A generalized fast multipole method for nonoscillatory kernels},
  author={Gimbutas, Zydrunas and Rokhlin, Vladimir},
  journal={SIAM Journal on Scientific Computing},
  volume={24},
  number={3},
  pages={796--817},
  year={2003},
  publisher={SIAM}
}

@article{yarvin1998generalized,
  title={A generalized one-dimensional fast multipole method with application to filtering of spherical harmonics},
  author={Yarvin, Norman and Rokhlin, Vladimir},
  journal={Journal of Computational Physics},
  volume={147},
  number={2},
  pages={594--609},
  year={1998},
  publisher={Elsevier}
}

@book{hibschweiler2020fractional,
  title={Fractional Cauchy Transforms},
  author={Hibschweiler, Rita A and MacGregor, Thomas H},
  year={2020},
  publisher={CRC Press}
}

@article{cui2025enhanced,
  title={Enhanced batch adaptive filter based on fractional-order generalized cauchy kernel loss},
  author={Cui, Mingjing and Jiang, Yunxiang and Lin, Dongyuan and Wang, Shiyuan and He, Fuliang},
  journal={IEEE Signal Processing Letters},
  volume={32},
  pages={1201--1205},
  year={2025},
  publisher={IEEE}
}

@article{schlag2007remark,
  title={A remark on Littlewood-Paley theory for the distorted Fourier transform},
  author={Schlag, Wilhelm},
  journal={Proceedings of the American Mathematical Society},
  volume={135},
  number={2},
  pages={437--451},
  year={2007}
}

@article{goldstein2001holder,
  title={H{\"o}lder continuity of the integrated density of states for quasi-periodic Schr{\"o}dinger equations and averages of shifts of subharmonic functions},
  author={Goldstein, Michael and Schlag, Wilhelm},
  journal={Annals of Mathematics},
  pages={155--203},
  year={2001},
  publisher={JSTOR}
}

@article{erdougan2008strichartz,
  title={Strichartz and smoothing estimates for Schr{\"o}dinger operators with large magnetic potentials in $\mathbb{R}^3$},
  author={Erdo{\u{g}}an, M Burak and Goldberg, Michael and Schlag, Wilhelm},
  journal={Journal of the European Mathematical Society},
  volume={10},
  number={2},
  pages={507--531},
  year={2008}
}

@article{coifman1985some,
  title={Some new function spaces and their applications to harmonic analysis},
  author={Coifman, Ronald R and Meyer, Yves and Stein, Elias M},
  journal={Journal of functional Analysis},
  volume={62},
  number={2},
  pages={304--335},
  year={1985},
  publisher={Elsevier}
}

@article{coifman1986nonlinear,
  title={Nonlinear harmonic analysis, operator theory and PDE},
  author={Coifman, Ronald R and Meyer, Yves},
  journal={Beijing lectures in harmonic analysis},
  volume={112},
  pages={3--45},
  year={1986}
}

@incollection{coifman2011harmonic,
  title={Harmonic analysis of digital data bases},
  author={Coifman, Ronald R and Gavish, Matan},
  booktitle={Wavelets and Multiscale Analysis: Theory and Applications},
  pages={161--197},
  year={2011},
  publisher={Springer}
}

@article{ostermann2022fourier,
  title={Fourier integrator for periodic NLS: low regularity estimates via discrete Bourgain spaces},
  author={Ostermann, Alexander and Rousset, Fr{\'e}d{\'e}ric and Schratz, Katharina},
  journal={Journal of the European Mathematical Society},
  volume={25},
  number={10},
  pages={3913--3952},
  year={2022}
}

@article{marsden2026splitting,
  title={A splitting scheme for the wave maps equation at low regularity},
  author={Marsden, Katie and Rousset, Fr{\'e}d{\'e}ric and Schratz, Katharina},
  journal={arXiv preprint arXiv:2605.11507},
  year={2026}
}

@article{muscalu2004bi,
  title={Bi-parameter paraproducts},
  author={Muscalu, Camil and Pipher, Jill and Tao, Terence and Thiele, Christoph},
  year={2004}
}

@article{muscalu2006multi,
  title={Multi-parameter paraproducts},
  author={Muscalu, Camil and Pipher, Jill and Tao, Terence and Thiele, Christoph},
  year={2006}
}

@article{alazard2024paracomposition,
  title={Paracomposition operators and paradifferential reducibility},
  author={Alazard, Thomas and Shao, Chengyang},
  journal={arXiv preprint arXiv:2410.17211},
  year={2024}
}

@article{alazard2009paralinearization,
  title={Paralinearization of the Dirichlet to Neumann operator, and regularity of three-dimensional water waves},
  author={Alazard, Thomas and M{\'e}tivier, Guy},
  journal={Communications in Partial Differential Equations},
  volume={34},
  number={12},
  pages={1632--1704},
  year={2009},
  publisher={Taylor \& Francis}
}

@article{daubechies1997harmonic,
  title={Harmonic analysis, wavelets and applications},
  author={Daubechies, Ingrid C and Gilbert, CA},
  journal={New Jersey},
  year={1997}
}

@article{gilbert2003one,
  title={One-pass wavelet decompositions of data streams},
  author={Gilbert, Anna C. and Kotidis, Yannis and Muthukrishnan, S and Strauss, Martin J},
  journal={IEEE Transactions on knowledge and data engineering},
  volume={15},
  number={3},
  pages={541--554},
  year={2003},
  publisher={IEEE}
}

@article{beylkin1992representation,
  title={On the representation of operators in bases of compactly supported wavelets},
  author={Beylkin, Gregory},
  journal={SIAM Journal on Numerical Analysis},
  volume={29},
  number={6},
  pages={1716--1740},
  year={1992},
  publisher={SIAM}
}

@article{alpert1993wavelet,
  title={Wavelet-like bases for the fast solution of second-kind integral equations},
  author={Alpert, Bradley and Beylkin, Gregory and Coifman, Ronald and Rokhlin, Vladimir},
  journal={SIAM journal on Scientific Computing},
  volume={14},
  number={1},
  pages={159--184},
  year={1993},
  publisher={SIAM}
}

@article{hackbusch1999sparse,
  title={A sparse matrix arithmetic based on-matrices. Part I: Introduction to-matrices},
  author={Hackbusch, Wolfgang},
  journal={Computing},
  volume={62},
  number={2},
  pages={89--108},
  year={1999},
  publisher={Springer}
}

@article{kaye2018transparent,
  title={Transparent Boundary Conditions for the Time-Dependent Schr$\backslash$" odinger Equation with a Vector Potential},
  author={Kaye, Jason and Greengard, Leslie},
  journal={arXiv preprint arXiv:1812.04200},
  year={2018}
}

@article{wang2019monarch,
  title={Monarch butterfly optimization},
  author={Wang, Gai-Ge and Deb, Suash and Cui, Zhihua},
  journal={Neural computing and applications},
  volume={31},
  number={7},
  pages={1995--2014},
  year={2019},
  publisher={Springer}
}

@article{nelson2026bridging,
  title={Bridging the Simulation-to-Experiment Gap with Generative Models using Adversarial Distribution Alignment},
  author={Nelson, Kai and Kreiman, Tobias and Levine, Sergey and Krishnapriyan, Aditi S},
  journal={arXiv preprint arXiv:2604.01169},
  year={2026}
}

@article{psenka2026parallel,
  title={Parallel stochastic gradient-based planning for world models},
  author={Psenka, Michael and Rabbat, Michael and Krishnapriyan, Aditi and LeCun, Yann and Bar, Amir},
  journal={arXiv preprint arXiv:2602.00475},
  year={2026}
}

@article{demanet2007wave,
  title={Wave atoms and sparsity of oscillatory patterns},
  author={Demanet, Laurent and Ying, Lexing},
  journal={Applied and Computational Harmonic Analysis},
  volume={23},
  number={3},
  pages={368--387},
  year={2007},
  publisher={Elsevier}
}

@inproceedings{madhu2026heist,
  title={HEIST: a graph foundation model for spatial transcriptomics and proteomics data},
  author={Madhu, Hiren and Rocha, Jo{\~a}o Felipe and Huang, Tinglin and Viswanath, Siddharth and Krishnaswamy, Smita and Ying, Rex},
  booktitle={International Conference on Learning Representations},
  volume={2026},
  pages={65453--65476},
  year={2026}
}

@article{lindsey2026goforth,
  title={GoForth: Language Models for RNA Design under Structure, Sequence, and Coding Constraints},
  author={Lindsey, Michael},
  journal={arXiv preprint arXiv:2605.07608},
  year={2026}
}

@inproceedings{fesserunitary,
  title={Unitary Convolutions for Message-passing and Positional Encodings on Directed Graphs},
  author={Fesser, Lukas and Kiani, Bobak and Weber, Melanie},
  booktitle={Forty-third International Conference on Machine Learning}
}

@article{mateo2026lu,
  title={LU Factorization of Discrete Random Matrices},
  author={Mateo, Samuel Orellana and Urschel, John and West, Nicholas},
  journal={arXiv preprint arXiv:2608.08998},
  year={2026}
}

@article{wilber2025time,
  title={A time-frequency method for acoustic scattering with trapping},
  author={Wilber, Heather and Vaes, Wietse and Gopal, Abinand and Martinsson, Gunnar},
  journal={arXiv preprint arXiv:2506.15165},
  year={2025}
}

@article{trefethen2025numerical,
  title={Numerical conformal mapping},
  author={Trefethen, Lloyd N},
  journal={Notices Amer. Math. Soc},
  volume={72},
  pages={1300--1303},
  year={2025}
}

@article{dent2026controlling,
  title={How Controlling the Variance can Improve Training Stability of Sparsely Activated DNNs and CNNs},
  author={Dent, Emily and Tanner, Jared},
  journal={arXiv preprint arXiv:2602.05779},
  year={2026}
}

@article{cai2026globally,
  title={A Globally Convergent Third-Order Newton Method via Unified Semidefinite Programming Subproblems},
  author={Cai, Yubo and Zhu, Wenqi and Cartis, Coralia and Zardini, Gioele},
  journal={arXiv preprint arXiv:2603.09682},
  year={2026}
}

@article{kondor2025principles,
  title={The principles behind equivariant neural networks for physics and chemistry},
  author={Kondor, Risi},
  journal={Proceedings of the National Academy of Sciences},
  volume={122},
  number={41},
  pages={e2415656122},
  year={2025},
  publisher={National Academy of Sciences}
}

@article{gopal2025highly,
  title={A highly accurate procedure for computing globally optimal Wannier functions in one-dimensional crystalline insulators: A. Gopal, H. Zhang},
  author={Gopal, Abinand and Zhang, Hanwen},
  journal={Advances in Computational Mathematics},
  volume={51},
  number={6},
  pages={52},
  year={2025},
  publisher={Springer}
}

@article{zhang2024finding,
  title={Finding roots of complex analytic functions via generalized colleague matrices: H. Zhang and V. Rokhlin},
  author={Zhang, Hanwen and Rokhlin, Vladimir},
  journal={Advances in Computational Mathematics},
  volume={50},
  number={4},
  pages={71},
  year={2024},
  publisher={Springer}
}

\end{document}